\newcommand{\R}{\mathbb R}
\newcommand{\N}{\mathbb N}
\newcommand{\K}{\mathbb{K}}
\newcommand{\beq}{\begin{equation}}
\newcommand{\eeq}{\end{equation}}
\newcommand{\beqs}{\begin{eqnarray}}
\newcommand{\eeqs}{\end{eqnarray}}
\newcommand{\beql}{\begin{equation} \label}
\newcommand{\half}{\frac{1}{2}}
\newcommand{\calP}{{\cal P}}
\newcommand{\calU}{{\cal U}}
\newcommand{\calX}{{\cal X}}
\newtheorem{proposition}{Proposition}[section]
\newtheorem{remark}{Remark}[section]
\newcommand{\p}{\partial}
\newcommand{\dee}{\mathcal{D}}
\newcommand{\scl}{\mathcal{L}}
\newcommand{\sch}{\mathcal{H}}
\date{}
\begin{document}
\title{A convex variational principle for the necessary conditions of classical optimal control}

\author{Amit Acharya\thanks{Department of Civil \& Environmental Engineering, and Center for Nonlinear Analysis, Carnegie Mellon University, Pittsburgh, PA 15213, email: acharyaamit@cmu.edu.} $\qquad \qquad \qquad$ Janusz Ginster\thanks{Weierstrass Institute, Mohrenstrasse 39, 10117 Berlin, Germany, email: ginster@wias-berlin.de.} }

\maketitle
\begin{abstract}
\noindent A scheme for generating a family of convex variational principles is developed, the Euler-Lagrange equations of each member of the family formally corresponding to the necessary conditions of optimal control of a given system of ordinary differential equations (ODE) in a well-defined sense. The scheme is applied to the Quadratic-Quadratic Regulator problem for which an explicit form of the functional is derived, and existence of minimizers of the variational principle is rigorously shown. It is shown that the Linear-Quadratic Regulator problem with time-dependent forcing can be solved within the formalism without requiring any nonlinear considerations, in contrast to the use of a Riccati system in the classical methodology.  

Our work demonstrates a pathway for solving nonlinear control problems via convex optimization.

\end{abstract}

\section{Introduction}
Optimal control theory for ODEs is a vast and well-developed subject at a level of maturity where many textbooks have been written on it - an excellent introduction, e.g., is \cite{evans1983introduction}, and it is beyond the scope (and neither the intent) of this work to provide an extensive review of the literature on the subject. Our modest goal here is to describe a particular viewpoint for attacking the equations describing some necessary conditions satisfied by optimal solutions of the problem, an approach which, to our knowledge, is new. The proposed scheme results in an unconstrained \emph{convex} variational principle for a mapping of time alone, taking values in $\R^{n+m+n}$ (where $t \mapsto x(t) \in \R^n$ describes the state, $t \mapsto u(t) \in \R^m$ is the control, and $t \in [0,T] \subset \R$ is time). This is in contrast to the Hamilton-Jacobi-Bellman (HJB) equation based approach to the optimal control problem which involves solving a first-order, nonlinear scalar PDE for the value function on a subset of $\R^{n+1}$; for $n$ large, this is a manifestation of the so-called `curse of dimensionality.'  Thus, our approach is expected to have some practical relevance, albeit that it solves necessary conditions of the problem, a feature also shared by the Pontryagin Maximum Principle (PMP). The PMP is extensively used in important practical applications, and our work contributes to efforts to bring general nonlinear control problems within the purview of convex optimization techniques \cite{malyuta2022convex} and, in general, to the body of work treating control problems as one of optimization \cite{cuthrell1987optimization}.

The Pontryagin Maximum Principle has recently been incorporated as a soft constraint into a Machine Learning scheme called PMP-net for optimal control problems  \cite{kamtue2024pontryagin}, much in the spirit of physics-informed-neural networks \cite{raissi2019physics}; a Least Squares objective defined from the PMP equations is added as an additional component to the Loss function used to train the scheme. Given a set of equations to be solved, it is understood that the solutions to the Euler-Lagrange equations of the least squares functional generated from `squaring' the equations can generate spurious solutions to the given set of equations, while our proposed duality scheme does not share this shortcoming \cite{action_2,sukumar2024variational}. Of course, it is understood that the least-squares strategy is a shortcoming only when the given equations constitute a `ground-truth' model - like the PMP for control problems - to which the ML trained trajectories should ideally comply. Thus, the developed dual variational principles in this paper have the potential to contribute to modern applications of control theory within the PMP-net paradigm by providing improved theory-informed training constraints. The proposed approach in this article arose in questions related to solving/approximating the (non)linear governing equations of problems in continuum mechanics by a duality based approach \cite{action_2,dual_cont_mech_plas,sga,KA1,KA2}, and is strongly related to the ideas of Hidden Convexity in PDE advanced by Y.~Brenier \cite{brenier2018initial, brenier_book}. 

An outline of the paper is as follows: in Sec.~\ref{sec:gen_form} we develop the differential algebraic system with boundary conditions that constitute necessary conditions of the optimal control problem of interest. In Sec.~\ref{sec:dual_vp} the derivation of the dual variational principle for the governing system developed in Sec.~\ref{sec:gen_form} is presented. The formalism is applied to the Quadratic-Quadratic Regulator problem in Sec.~\ref{sec:QQR} where the explicit form of the dual functional is obtained, along with its analog amenable to convex optimization. Sec.~\ref{sec:analysis} contains a proof of coercivity and boundedness from below of the dual functional for the QQR problem which, along with its guaranteed weak lower-semicontinuity by design, ensures the existence of a minimizer. Finally, Sec.~\ref{sec:LQR} contains an application to the classical Linear-Quadratic Regulator problem, with time-dependent forcing. A comparison of the proposed scheme with the classical solution protocol for the problem based on solving a nonlinear Riccati system is discussed, and an explicit solution to the dual problem for a simple example LQR is also derived. While we do not deal with free-final-time problems in this paper as well as state-space constraints, the general dual methodology developed here suggests that both of these problem features can be naturally accommodated.

A few words on notation: an overhead dot will represent a time derivative. We will always use the summation convention on repeated indices, unless otherwise specified. The notation $f|_a$ denotes the evaluation of the function $f$ at the argument $a$, i.e., $f|_a = f(a)$, and we will use both notations as convenient.

\section{The necessary conditions for optimality in the classical optimal control problem}\label{sec:gen_form}
The classical optimal control problem may be stated as (see, e.g., \cite{evans1983introduction}, \cite[Sec.~9.5,9.6]{luenberger_optimization}),
\begin{subequations}\label{eq:op_cont}
\allowdisplaybreaks
    \begin{align}
        & \mbox{for} \quad r: \R^n \times \R^m \times \R \to \R; \quad x: \R \to \R^n; \quad u: \R \to \R^m; \quad g:\R^n \times \R \to \R; \quad r,g \ \mbox{given} \notag\\
        & \mbox{maximize} \qquad  P[u]  = - \int_0^T r(x(t), u(t),t) \, dt - g(x(T),T) \tag{\ref{eq:op_cont}}\\
        & \mbox{subject to} \begin{cases}
            \dot{x}(t)  = f(x(t),u(t),t), \qquad f:\R^n \times \R^m \times \R \to \R^n  \ \mbox{given},\\
            x(0) = x^0 \in \R^n, \quad x^0 \ \mbox{specified}.
        \end{cases} \notag
    \end{align}
\end{subequations}
Here, $x$ is the state function and $u$ is the control.

Using the method of Lagrange multipliers with $p:\R \to \R^n$ as the co-state multiplier functions, one can seek to solve the unconstrained maximization problem given by
\begin{equation*}
    \mbox{maximize} \quad \tilde{P}[x,u,p] = - \int_0^T r\left(x|_t,u|_t,t \right) \ + \ p|_t \ \cdot \ \big(\dot{x}|_t - f(x|_t,u|_t,t) \big) \, dt \ - \ g(x|_T, T) \ - \ p|_0 \cdot (x|_0 - x^0).
\end{equation*}
The first variation of $\tilde{P}$ about a state $(x,u,p)$ in the direction $(\delta x, \delta u, \delta p)$ is given by
\begin{subequations}
\allowdisplaybreaks
\begin{align}
    \delta \tilde{P}|_{(\delta x, \delta u, \delta p)}[x,u,p] & = \int_0^T - \p_x r \cdot \delta x - \p_u r \cdot \delta u - \delta p \cdot (\dot{x} - f) - p \cdot \delta \dot{x} + p \cdot (\p_u f \cdot \delta u + \p_x f \cdot \delta x) \quad dt \notag\\
    & \qquad - (\p_x g)|_T \cdot \delta x|_t \ - \ \delta p|_0 \cdot x|_0  \ +  \ \delta p|_0 \cdot x^0 \ - \ p|_0 \cdot \delta x|_0 \notag\\
    & = \int_0^T \delta x \cdot \big(  - \p_x r + \dot{p} + p_j \p_x f_j \big) \ + \ \delta u \cdot \big(  - \p_u r + p_i \p_u f_i \big) \ + \ \delta p \cdot \big( - \dot{x} + f \big) \ dt   \notag\\
    & \qquad + \delta x|_T \cdot \big( - (\p_x g)|_{(x|_T,T)} - p|_T \big) \  - \ \delta x|_0 \cdot (-p|_0 + p|_0) \ + \ \delta p|_0 \cdot \big( x|_0 - x^0 \big). \notag
\end{align}
\end{subequations}
Consequently, a maximizer $(x,u,p)$ of $\tilde{P}$ would formally satisfy the following Euler-Lagrange equations:
\begin{subequations}\label{eq:control_EL}
\allowdisplaybreaks
    \begin{align}
        \dot{x}|_t - f\big(x|_t,u|_t, t\big) & = 0 \label{eq:x_evol}\\
        \dot{p}|_t \ -  \ \p_x r\big(x|_t,u|_t,t \big) \ + \ p_j|_t \, \p_x \, f_j \big(x|_t,u|_t,t\big) & = 0 \label{eq:p_evol}\\ 
        p_i \p_u f_i \big(x|_t,u|_t,p|_t\big) - \p_u r \big(x|_t,u|_t,p|_t \big) & = 0 \label{eq:H_crit}\\
        p|_T + \p_x g(x|_T,T) & = 0. \label{eq:p_bc}\\
        x|_0 -  x^0 & = 0 \label{eq:x_bc}
    \end{align}   
\end{subequations}
We note that defining the `control theory Hamiltonian' \cite[Sec.~4.3]{evans1983introduction} as
\[
H(x,u,p,t) : = p  \cdot f(x,u,t) - r(x,u,t),
\]
(\ref{eq:x_evol},\ref{eq:p_evol},\ref{eq:H_crit}) may be expressed as
\begin{subequations}
\allowdisplaybreaks
\begin{align}
     \dot{x} & = \p_p H \label{eq:H_1}\\
    \dot{p}  &= - \p_x H \label{eq:H_2}\\
     0  &= \p_u H \label{eq:H_3}
\end{align}
\end{subequations}
which are necessary conditions satisfied by a triple of functions $(x,u,p)$ satisfying the Pontryagin Maximum Principle (PMP) of Optimal Control (see, e.g., \cite{evans1983introduction}) for \eqref{eq:op_cont}; (\ref{eq:H_1},\ref{eq:H_2}) are exact statements of the PMP, while \eqref{eq:H_3} is a necessary condition satisfied by solutions of the PMP when there is enough smoothness for $\p_u H (x,u,p,t)$ to make sense.

\section{A dual variational principle for \eqref{eq:control_EL} and a convex optimization problem}\label{sec:dual_vp}
The equations \eqref{eq:control_EL} contain first order ODEs and are not easily converted to a second order system typical of Euler-Lagrange equations of variational principles with a local Lagrangian that is a function of first order derivatives in time and lower-order terms; we note that the Hamiltonian $H$ is linear in $p$ and hence not strictly convex in it, so that a Legendre transform cannot be implemented to obtain a corresponding Lagrangian. Moreover, the forward-in-time nature of the ($x$) state  evolution (\ref{eq:x_evol}, \ref{eq:x_bc}) and backward-in-time nature of the ($p$) co-state evolution (\ref{eq:p_evol}, \ref{eq:p_bc}) pose a significant challenge for practical approximation schemes for solving nonlinear control problems not suffering from the `curse of dimensionality,' as arises in a Hamilton-Jacobi-Bellman (HJB) formulation \cite[Secs.~5.1.2,5.1.3]{evans1983introduction} of the problem \cite{borggaard2020quadratic,krener,navasca2000solution,al1961optimal}. The goal here is to develop a variational approach to \eqref{eq:control_EL} which has the potential of addressing these issues. As well, it allows the inclusion (but not as a necessity) of `guiding' state, control, and co-state functions $(t \mapsto \bar{x}(t), t \mapsto \bar{u}(t), t \mapsto \bar{p}(t))$ in the formulation as parameters, such that obtained solutions may be expected to be close to these user-specified guiding functions.

We denote the \emph{primal} functions $U := (x,u,p)$ and consider arbitrarily specified (`designable') functions of time called \emph{base states} 
\[
\bar{U} := (\bar{x},\bar{u},\bar{p}): \R \supset [0,T] \to \R^n \times \R^m \times \R^n; \qquad t \mapsto \bar{U}(t).
\]
Also considered is a triple of \emph{dual} functions
\[
D := (\gamma, \mu, \lambda); \qquad \gamma, \lambda: \R \supset [0,T] \to \R^n, \qquad \mu: \R \supset [0,T] \to \R^m; \qquad \dee := (D, \dot{D})
\]
and a freely chosen (designable) \emph{auxiliary potential}
\[
\sch: \R^n \times \R^m \times \R^n \times \R^n \times \R^m \times \R^n \to \R
\]
that is assumed to have a positive-definite Hessian everywhere w.r.t its first $n+m+n$ arguments.

In terms of these ingredients, we define the \emph{pre-dual} functional
\begin{equation}\label{eq:pre_dual}
\begin{aligned}
    \widehat{S}_\sch[x,u,p, \gamma, \mu, \lambda] & = \int_0^T \Big( - x_i \dot{\gamma}_i - \gamma_i f_i(x,u,t) \\
    & \qquad \qquad - p_i \dot{\lambda}_i - \lambda_i \p_{x_i} r(x,u,t) + \lambda_i p_j \p_{x_j} f_i (x,u,t) \\
    & \qquad \qquad + \mu_\kappa p_i \p_{u_\kappa} f_i(x,u,t) - \mu_\kappa \p_{u_\kappa} r (x,u,t)  \\
    & \qquad \qquad  - \sch(x,u,p,\bar{x}, \bar{u}, \bar{p}) \ \Big) \, dt \\
    & \qquad \qquad - \gamma_i|_0 \, x^0_i \quad + \quad \gamma_i|_T \, x_i|_T \quad - \quad \lambda_i|_T \, \p_{x_i} g(x|_T,T)  \\
    & =: \int_0^T \scl_{\sch}(U,\dee, \bar{U}, t) \,dt \quad - \quad \gamma_i|_0 \, x^0_i \quad + \quad \gamma_i|_T \, x_i|_T  \quad - \quad  \lambda_i|_T \, \p_{x_i} g\big|_{(x|_T,T)} ,
\end{aligned}
\end{equation}
and require the choice of $\sch$ to be such that it enables the definition of a \emph{dual-to-primal} (DtP) mapping
\[
(\dee, \bar{U}, t) \mapsto U^\sch(\dee, \bar{U}, t) = \Big(x^\sch, u^\sch, p^\sch \Big)\Big|_{(\dee, \bar{U}, t)}; \qquad U^\sch: \R^{2n+(n+m+n)+1} \supset \mathcal{O} \to \R^{n+m+n} 
\]
such that
\begin{equation}\label{eq:dL_dU}
    \partial_U \scl_\sch \big( U^\sch(\dee, \bar{U}, t), \dee, \bar{U}, t) = 0
\end{equation}
is satisfied on the set $\mathcal{O}$. Thus, the choice of $\sch$ should allow `solving for $U$ in terms of $(\dee, \bar{U}, t)$' from \eqref{eq:dL_dU}.

One now defines the \emph{dual} functional, $S_\sch[\gamma, \mu,\lambda]$, as the one obtained by the substitution of the DtP mapping into the pre-dual functional:
\begin{equation}\label{eq:pont_dual}
    \begin{aligned}
        S_\sch[\gamma, \mu,\lambda] & := \widehat{S}_\sch \Big[x^\sch,u^\sch,p^\sch, \gamma, \mu, \lambda \Big] \\
        & = \int_0^T \scl_{\sch}\Big(U^\sch\big( \dee|_t, \bar{U}|_t, t \big),\dee|_t, \bar{U}|_t, t \Big) \,dt \\ 
        & \quad - \ \gamma_i|_0 \, x^0_i \quad + \quad  \gamma_i|_T \, x_i^\sch\Big(\dee|_T,\bar{U}|_T,T \Big)  \quad - \quad   \lambda_i|_T \, \p_{x_i} g \Big(x^\sch \big(\dee|_T,\bar{U}|_T,T \big),T \Big).
    \end{aligned}
\end{equation}
In the following, we use the notation
\[
 \quad t \mapsto \mathsf{d}(t) = \mathsf{d}|_t := (\dee|_t,\bar{U}|_t,t) ; \qquad  \mathsf{s}^\sch \big|_t := \Big( x^\sch \Big|_{ \mathsf{d}|_t}, u^\sch \Big|_{ \mathsf{d}|_t}, t \Big); \qquad \delta \dee := (\delta D, \dot{\overline{\delta D}}).
\]
The first variation of $S_\sch$ at a dual state $D$ in a direction $\delta D: [0,T] \to \R^{n+m+n}$ (denoted below by  $\delta S_\sch|_{\delta D} [D]$), constrained by the boundary conditions
\begin{subequations}\label{eq:dual_bc}
\begin{align}
   \gamma_i|_T \ - \ \lambda_j|_T \, \p_{x_i x_j} g \, \Big( x^\sch \big|_{ \mathsf{d}|_t} , T \Big)  & = 0 \label{eq:gamma_bc}\\
   \lambda|_0 = \ \mbox{specified arbitrarily, say } \lambda^0; \qquad \delta \lambda|_0 & = 0 \label{eq:lambda_bc}
\end{align}
\end{subequations}
and for which \eqref{eq:dL_dU} is satisfied in $[0,T]$, is given, after integration by parts and using \eqref{eq:dL_dU} and \eqref{eq:lambda_bc}, by
\begingroup
\allowdisplaybreaks
    \begin{align}
         \delta S_\sch\big|_{\delta D}[D] & = \int_0^T \bigg( \ \delta \gamma_i|_t \, \bigg\{ \dot{\overline{x^\sch_i \circ \mathsf{d}}}\Big|_t \ - \ f_i \Big( \mathsf{s}^\sch \big|_t \Big) \bigg\} \notag \\
        & \qquad \qquad + \delta \lambda_i|_t \, \bigg\{ \dot{\overline{p^\sch_i \circ \mathsf{d}}}\Big|_t  \ - \ \p_{x_i} r \Big( \mathsf{s}^\sch \big|_t \Big) \ + \ p^\sch_j\Big|_{ \mathsf{d}|_t} \, \p_{x_j} f_i \Big( \mathsf{s}^\sch \big|_t \Big) \bigg\} \notag\\
        & \qquad \qquad + \delta \mu_\kappa|_t \, \bigg\{ p^\sch_i \Big|_{ \mathsf{d}|_t}  \p_{u_\kappa} f_i \Big( \mathsf{s}^\sch \big|_t \Big) -  \p_{u_\kappa} r \Big(  \mathsf{s}^\sch \big|_t\Big)\bigg\}      \ \bigg) \, dt \label{eq:dual_first_var}\\
        & \quad + \ \delta \gamma_i|_0 \, \Big\{  x^\sch_i \Big|_{\mathsf{d}_0} - x_i^0 \Big\} \quad - \quad \delta \lambda_i|_T \, \Big\{ p^\sch_i\Big|_{ \mathsf{d}|_T}  + \p_{x_i} g \Big( x^\sch \Big|_{ \mathsf{d}|_T}, T \Big) \Big\} \notag\\
        & \quad + \ \delta \gamma_i|_T \, \Big\{  x^\sch_i \Big|_{ \mathsf{d}|_T} - x^\sch_i \Big|_{ \mathsf{d}|_T} \Big\} + \Big( \p_\dee \, x^\sch_i\Big|_{ \mathsf{d}|_T} \cdot \delta \dee \Big) \Big\{ \gamma_i|_T \ - \ \lambda_j|_T \, \p_{x_i x_j} g \, \Big( x^\sch \Big|_{ \mathsf{d}|_T} , T \Big) \Big\}. \notag
    \end{align}
\endgroup
The last line of \eqref{eq:dual_first_var} vanishes due to \eqref{eq:gamma_bc}; one term of the first curly bracket on the same line arises from the variation of the boundary term $\gamma_i|_T \, x_i|_T $; the other from the variation of the term $- x_i \dot{\gamma}_i$ in the Lagrangian $\scl_\sch$ after integration by parts. The five other expressions within curly brackets in \eqref{eq:dual_first_var} are the set of equations \eqref{eq:control_EL} with the substitution $U \to U^\sch$. Thus, by standard arguments, the Euler-Lagrange equations of the dual functional \eqref{eq:pont_dual} are the necessary conditions \eqref{eq:control_EL} for the classical optimal control problem \eqref{eq:op_cont}, utilizing an adapted change of variables defined by the DtP mapping $U^\sch$.

It is worth noting that if the the DtP mapping has the property that $U^\sch (\dee,\bar{U},t) = \bar{U}$ for $\dee = 0$ - this can be arranged in many circumstances by an appropriate choice of $\sch$, and examples are provided in Secs.~\ref{sec:QQR}-\ref{sec:LQR} - then, if $\bar{U}$ was a solution to \eqref{eq:control_EL} then $D= 0$ is a critical point of $S_\sch$.

Depending on the nonlinearity of the terminal cost function $g$, the final-time boundary condition \eqref{eq:gamma_bc} may pose a nonlinear constraint on the dual space of functions involved; for $g$ a quadratic form in its first argument, the boundary condition becomes a linear constraint.

\emph{In all that follows, we will assume that $g = x_i G_{ij} x_j$, with $G$ symmetric, positive semi-definite}.

With this choice, along with imposing the constraint 
\begin{equation}\label{eq:linear_dual_bc}
\gamma|_T - G \lambda|_T = 0
\end{equation}
(the b.~c.~\eqref{eq:gamma_bc}) for all functions $(\gamma, \lambda)$ in the set on which $\widehat{S}_\sch$ is defined, the functional takes the form
\[
\widehat{S}_\sch[x,u,p, \gamma, \mu, \lambda] = \int_0^T \scl_\sch(U,\dee, \bar{U}, t) \,dt \quad - \quad \gamma_i|_0 \, x^0_i.
\]

We now consider a slightly different (but related) dual functional $\tilde{S}_\sch$ given by
\begin{equation}
    \tilde{S}_\sch[D] = \sup_U \int_0^T \scl_\sch (U,\dee, \bar{U}, t) \,dt \quad - \quad \gamma_i|_0 \, x^0_i.
\end{equation}
It is reasonable that the maximization over $U$ can be moved into the integral and noting that \emph{ $\scl_\sch$ is necessarily affine in $\dee$},
\[
\tilde{\scl}_\sch (\dee, \bar{U}, t) := \sup_U \scl_\sch (U,\dee, \bar{U}, t)
\]
is \emph{convex in $\dee$}.

Moreover, note that if formulated in the correct function spaces then $\tilde{S}_H$ is the supremum over continuous, affine functions. In particular, $\tilde{S}_H$ is the supremum over weakly continuous functions and therefore lower-semicontinuous with respect to weak convergence. Thus, up to coercivity of $\tilde{S}_\sch[D]$ one may even expect to prove existence of minimizers for it in appropriate Sobolev spaces for the dual fields $(\gamma, \mu, \lambda)$ (as is done in Sec.~\ref{sec:analysis}). 

We note that for the class of dual fields for which the maximization (over $U$) to define $\tilde{S}_\sch$ has a unique maximizer, the DtP mapping $U^\sch$ is a well-defined object a.e. in $[0,T]$, and for such dual fields the value of $S_\sch$ and $\tilde{S}_\sch$ coincide. Since a minimizer of the convex $\tilde{S}_\sch$ functional is also its critical point (at least formally), such minimizers can be expected to be critical points of $S_\sch$, if they also belong to the set for which the DtP mapping is well-defined a.e~in $[0,T]$, i.e., under suitable hypotheses
\[
t \mapsto U^\sch\big(\dee[{\arg\!\min} \, \tilde{S}_\sch[D]](t)\big)
\]
can define a solution to \eqref{eq:control_EL}, which are necessary conditions satisfied by solutions to the problem of classical optimal control \eqref{eq:op_cont}. In the above, $\dee[D^*](t)$ denotes the evaluation, at time $t$, of $\dee$ constructed from the dual field $D^*$.

Thus, upon discretization of $\tilde{S}_\sch[D]$ by any Rayleigh-Ritz basis, we have a \emph{convex optimization} formulation of the necessary conditions \eqref{eq:control_EL} of the, in general nonlinear, classical optimal control problem.

\section{A dual convex optimization formulation for the Quadratic-\\Quadratic Regulator (QQR)}\label{sec:QQR}
The goal of this section is to develop the explicit formula for the dual functional for the Quadratic-Quadratic Regulator problem \cite{borggaard2020quadratic,krener} involving a quadratically nonlinear state evolution. Many important scientific problems fall within this class - the Lorenz system \cite{lorenz1963deterministic} and the Fermi-Pasta-Ulam-Tsingou problem \cite{fermi1955studies} in the ODE context, and (discretized versions of) the Euler and Navier-Stokes equations.

We consider the special case of quadratic running and terminal cost, as well as a quadratically nonlinear state evolution with linear dependence on the control $u$. The problem \eqref{eq:op_cont} is defined by:
\begin{subequations}
\allowdisplaybreaks
\label{eq:QQR_def}
\begin{align}
& \mbox{Given matrices} \quad B, G \in \R^{n \times n},  C \in \R^{m \times m}, M \in \R^{n \times n}, N \in \R^{n \times m}, F \in \R^{n \times n \times n} \notag\\
& B,C,G \ \mbox{symmetric, positive-semidefinite} \notag\\
& F \ \mbox{symmetric in last two indices}, \mbox{ i.e., } F a = 0, \forall \, a \in \R^{n \times n} \ \mbox{and skew-symmetric;} \notag\\
    & r(x,u,t) := \half (x_i B_{ij} x_j + u_\alpha C_{\alpha \beta} u_\beta) \notag\\
    & g(x,T) := x_iG_{ij} x_j \notag\\
    & f_i(x,u,t) := A_i(t) + M_{ij} x_j + N_{i\alpha} u_\alpha + \half x_r F_{irs} x_s, \mbox{ with given function } t \mapsto A(t)  \in \R^n \mbox{ and} \notag\\
    & \mbox{Given functions} \ t\mapsto \bar{x}(t), t \mapsto \bar{u}(t), t\mapsto \bar{p}(t) \mbox{ (with 0 a possible choice for any of } \bar{x}, \bar{u}, \bar{p}), \notag
\end{align}
\end{subequations}
the primal system is given by
\begin{equation}
    \label{eq:primal_QQR}
    \begin{aligned}
        \dot{x}_i & = A_i + M_{ij} x_j + N_{i\alpha} u_\alpha + \half x_r F_{irs} x_s \\
        \dot{p}_i & = B_{ij} x_j - p_j  M_{ji} - p_j F_{jri} x_r\\
        0 & = p_i N_{i \alpha} - u_\beta C_{\beta \alpha} 
    \end{aligned}
\end{equation}
on $[0,T]$ with boundary conditions
\begin{equation}
    \label{eq:primal_QQR_bc}
    \begin{aligned}
         p_i|_T + G_{ij} x_j|_T & = 0\\
    x_i|_0 & = x^0_i.
    \end{aligned}
\end{equation}
As the auxiliary potential $\mathcal{H}$ we choose the quadratic potential
\[
Q(x,u,p) := \half \Big( a_x |x - \bar{x}|^2 + a_u |u - \bar{u}|^2 + a_p |p - \bar{p}|^2 \Big), \qquad a_x, a_u, a_p > 0.
\]
Then the pre-dual functional is given by, c.f.~\eqref{eq:pre_dual},
\begin{equation*}
    \begin{aligned}
        \widehat{S}_Q[U,D] & =  \int_0^T \Big\{ - \gamma_i A_i - x_i \dot{\gamma}_i - \gamma_i M_{ij} x_j - \gamma_i N_{i \alpha} u_\alpha - \gamma_i \half F_{irs} x_r x_s \\
        & \qquad \qquad - p_i \dot{\lambda}_i - \lambda_i B_{ij} x_j + p_j M_{ji} \lambda_i + p_j F_{jri} x_r \lambda_i\\
        & \qquad \qquad +  p_i N_{i \alpha} \mu_\alpha -  u_\beta C_{\beta \alpha} \mu_\alpha\\
        & \qquad \qquad - \half \Big( a_x |x - \bar{x}|^2 + a_u |u - \bar{u}|^2 + a_p |p - \bar{p}|^2 \Big) \ \Big\} \, dt\\
        & \qquad - \gamma_i|_0 x^0_i \quad + \quad \gamma_i|_T x_i|_T \quad - \quad \lambda_i|_T G_{ij} x_j|_T,
    \end{aligned}
\end{equation*}
and after imposition of the constraint boundary condition \eqref{eq:gamma_bc}, the last two terms drop out. 

\emph{We denote the integrand as $\scl_Q$}. In order to generate the explicit formula for the dual QQR functional, it is efficient to write the Lagrangian $\scl_Q$ in terms of the factor $U - \bar{U}$ instead of $U$ and subsequently focus on the linear, quadratic, and `constant' terms in this factor. Then the linear and quadratic terms combine in a compact manner to deliver the final expression \eqref{eq:QQR_dual}. These steps are demonstrated below.

The DtP mapping $U^Q := \big(x^Q,u^Q,p^Q \big)$ is generated from the following set of conditions:
\begin{subequations}
    \label{eq:QQR_DtP}
    \allowdisplaybreaks
    \begin{align}
        \p_{x_i} \scl_Q &= 0: \quad - \Big( a_x \delta_{ir} + \gamma_k F_{kir} \Big) \,\Big(x^Q_r - \bar{x}_r \Big) + \lambda_k F_{jik} \Big( p^Q_j - \bar{p}_j \Big) \notag\\
        & \qquad \qquad \qquad \qquad \qquad \qquad = \dot{\gamma}_i + \gamma_k M_{ki} + \lambda_k B_{ki} + \gamma_k F_{kis} \bar{x}_s - \lambda_k F_{jik} \bar{p}_j \tag{\ref{eq:QQR_DtP}}\\
        \p_{p_i} \scl_Q & = 0: \quad - a_p \Big(p^Q_i - \bar{p}_i \Big) + \lambda_k F_{irk} \,\Big(x^Q_r - \bar{x}_r \Big) = \dot{\lambda}_i - M_{ik} \lambda_k - N_{i\alpha} \mu_\alpha - \lambda_k F_{irk} \bar{x}_r \notag\\
        \p_{u_\alpha} \scl_Q & = 0: \quad - a_u \Big(u^Q_\alpha - \bar{u}_\alpha \Big)  = \gamma_i N_{i \alpha} + C_{\alpha \beta} \mu_\beta. \notag
    \end{align}
\end{subequations}
We now introduce the notation
\begin{subequations}
    \allowdisplaybreaks
    \begin{align}
        & \calX_i|_\dee := \dot{\gamma}_i + \gamma_k M_{ki} + \lambda_k B_{ki} \ ; \qquad \bar{\calX}_i|_{(\dee,\bar{U})} := \gamma_k F_{kis} \bar{x}_s - \lambda_k F_{jik}\bar{p}_j \notag \\
        & \calP_i |_\dee := \dot{\lambda}_i - M_{ik} \lambda_k - N_{i \alpha} \mu_\alpha \ ; \qquad \bar{\calP}_i |_{(\dee,\bar{U})} := - \lambda_k F_{irk} \bar{x}_r \notag\\
        & \calU_\alpha |_\dee := \gamma_i N_{i \alpha} + C_{\alpha \beta} \mu_\beta  \notag \\
        \notag \\
        & \mathcal{J}|_{(\dee, \bar{U})} : = \begin{Bmatrix} \calX|_\dee + \bar{\calX}|_{(\dee, \bar{U})} \\ \\ \calP|_\dee + \bar{\calP}|_{(\dee, \bar{U})} \end{Bmatrix} \in \R^{2n}\; \qquad \K|_\dee := \begin{bmatrix} a_x I + (\gamma F) & -(F\lambda)^T   \\
                                                                  &   \\
                                                                  -(F \lambda) & a_p I \end{bmatrix} \in \R^{2n \times 2n} \ ; \notag\\
                                                                  &  I \mbox{ is the } n \times n \mbox{ identity matrix} \ ; \qquad F: \R^n \to \R^{n \times n} \ ; \qquad t \mapsto (\gamma(t) F) \in \R^{n \times n}, \ (\gamma F)_{ir} =\gamma_k F_{kir} \ \notag\\
        & t \mapsto U_*(t) := \begin{Bmatrix} x(t)  \\
                                    \\
                                  p(t)
                   \end{Bmatrix} \in \R^{2n} \ ; \qquad t \mapsto \bar{U}_*(t) = \begin{Bmatrix} \bar{x}(t) \\
                                                                \\
                                                             \bar{p}(t)
                                                        \end{Bmatrix} \in \R^{2n} \notag
    \end{align}
\end{subequations}
in terms of which the DtP mapping can be expressed as
\begin{subequations}\label{eq:QQR_DtP}
    \allowdisplaybreaks
    \begin{align}
        - \K_\dee \big(U^Q_* - \bar{U}^Q_*\big) & = \mathcal{J}|_{(\dee,\bar{U})} \notag\\
        - a_u (u - \bar{u}) & = \calU |_\dee. \tag{\ref{eq:QQR_DtP}}
    \end{align}
\end{subequations}
The Lagrangian $\scl_Q$ can be expressed as
\begin{subequations}\label{eq:mixed_QQR_L}
\allowdisplaybreaks
\begin{align}
    \scl_Q \big( U,\dee,\bar{U},t\big) & = - \gamma_i A_i -(x_i - \bar{x}_i) \dot{\gamma}_i - \bar{x}_i \dot{\gamma}_i - \gamma_i M_{ij} (x_j - \bar{x}_j) - \gamma_i M_{ij} \bar{x}_j - \gamma_i N_{i\alpha} (u_\alpha - \bar{u}_\alpha) - \gamma_i N_{i \alpha} \bar{u}_\alpha \notag\\
    & \quad - (p_i - \bar{p}_i) \dot{\lambda}_i - \bar{p}_i \dot{\lambda}_i - \lambda_i B_{ij} (x_j - \bar{x}_j) - \lambda_i B_{ij} \bar{x}_j + (p_j - \bar{p}_j) M_{ji} \lambda_i + \bar{p}_j M_{ji} \lambda_i \notag\\
    & \quad + (p_i - \bar{p}_i) N_{i \alpha} \mu_\alpha + \bar{p}_i N_{i \alpha} \mu_\alpha - \mu_\alpha C_{\alpha \beta} (u_\beta - \bar{u}_\beta) - \mu_\alpha 
    C_{\alpha \beta} \bar{u}_\beta \notag\\
    & \quad - \half \Big( a_x (x - \bar{x}_i) (x - \bar{x}_i) + a_u(u_\alpha - \bar{u}_\alpha) (u_\alpha - \bar{u}_\alpha) + a_p(p_i - \bar{p}_i)(p_i - \bar{p}_i) \Big) \notag\\
    & \quad - \half (x_r - \bar{x}_r) \gamma_i F_{irs} (x_s - \bar{x}_s) \notag\\
    & \quad - \half (x_r - \bar{x}_i) \gamma_iF_{irs} \bar{x}_s  - \half \bar{x}_r \gamma_i F_{irs} (x_s - \bar{x}_s) + \half \bar{x}_r \gamma_i F_{irs} \bar{x}_s \notag\\
    & \quad - \half \bar{x}_r \gamma_i F_{irs} \bar{x}_s - \half \bar{x}_r \gamma_i F_{irs} \bar{x}_s \notag\\
    & \quad + \lambda_i ( p_j - \bar{p}_j ) F_{jri} ( x_r - \bar{x}_r ) \notag \\
    & \quad + \lambda_i (p_j - \bar{p}_j ) F_{jri} \bar{x}_r + \lambda_i \bar{p}_j F_{jri} ( x_r - \bar{x}_r ) - \lambda_i \bar{p}_j F_{jri} \bar{x}_r \notag \\
    & \quad + \lambda_i \bar{p}_j F_{jri} \bar{x}_r + \lambda_i \bar{p}_j F_{jri} \bar{x}_r \notag \\
    & \qquad \qquad = \notag \\
    & \quad (x_i - \bar{x}_i) \big( - \dot{\gamma}_i - \gamma_k M_{ki} - \lambda_k B_{ki} - \gamma_k F_{kis} \bar{x}_s + \lambda_k \bar{p}_j F_{jik} \big) \notag \\
      & \quad - \half (x_i - \bar{x}_i) \big( a_x \delta_{ir} + \gamma_k F_{kir} \big) (x_r - \bar{x}_r) + \half (x_i - \bar{x}_i) F_{jik} \lambda_k (p_j - \bar{p}_j) \notag \\
      & \quad  + (p_i - \bar{p}_i) \Big( - \dot{\lambda}_i + M_{ik} \lambda_k + N_{i \alpha} \mu_\alpha + \lambda_k F_{irk} \bar{x}_r \Big) \notag \\
      & \quad  - \half a_p (p_i - \bar{p}_i) (p_i - \bar{p}_i) + \half (p_ i - \bar{p}_i) F_{irk} \lambda_k (x_r - \bar{x}_r) \notag\\
      & \quad  + (u_\alpha - \bar{u}_\alpha) (- \gamma_i N_{i \alpha} - C_{\alpha \beta} \mu_\beta ) - \half a_u (u_\alpha - \bar{u}_\alpha) (u_\alpha - \bar{u}_\alpha) \notag\\
      & \quad  + \bar{x}_i \big( - \dot{\gamma}_i - \gamma_k M_{ki} - \lambda_k B_{ki} - \gamma_k F_{kis} \bar{x}_s + \lambda_k \bar{p}_j F_{jik} \big) \notag \\
      & \quad  + \bar{p}_i \Big( - \dot{\lambda}_i + M_{ik} \lambda_k + N_{i \alpha} \mu_\alpha + \lambda_k F_{irk} \bar{x}_r \Big) \notag \\
      & \quad + \bar{u}_\alpha \Big( - \gamma_i N_{i \alpha} - C_{\alpha \beta} \mu_\beta \Big) \notag\\
      & \quad  + \Big( \half + \half \Big) \bar{p}_j F_{jri}\lambda_i  \bar{x}_r - \half \bar{x}_r \gamma_i F_{irs} \bar{x}_s - A_i \gamma_i  \notag \\ 
      & \qquad \qquad = \notag \\
      & \quad - \half \big( U_* - \bar{U}_* \big) \cdot \K|_\dee \big( U_* - \bar{U}_* \big) - \half a_u (u - \bar{u})\cdot (u - \bar{u}) \notag\\
      & \quad - \big( U_* - \bar{U}_* \big) \cdot \mathcal{J}|_{(\dee,\bar{U})} - (u - \bar{u}) \cdot \mathcal{U}_\dee \tag{\ref{eq:mixed_QQR_L}}\\
      & \quad - \bar{x} \cdot \left( \calX |_\dee + \half \bar{\calX}|_{(\dee, \bar{U})} \right) - \bar{p} \cdot \left( \calP |_\dee + \half \bar{\calP}|_{(\dee, \bar{U})} \right) - \bar{u} \cdot \calU |_\dee - A \cdot \gamma.\notag
\end{align}
\end{subequations}
 so that the QQR pre-dual functional may be written as
 \begin{subequations}\label{eq:mixed_QQR}
 \allowdisplaybreaks
 \begin{align}
      \widehat{S}_Q[U,D] & = - \half \int_0^T \Big\{ \big( U_* - \bar{U}_* \big) \cdot \K|_\dee \big( U_* - \bar{U}_* \big) - a_u (u - \bar{u})\cdot (u - \bar{u}) \Big\} \, dt\notag\\
      & \quad - \int_0^T \Big\{ \big( U_* - \bar{U}_* \big) \cdot \mathcal{J}|_{(\dee,\bar{U})} + (u - \bar{u}) \cdot \mathcal{U}_\dee \Big\} \, dt \tag{\ref{eq:mixed_QQR}}\\
      & \quad - \int_0^T \bigg\{ \bar{x} \cdot \left( \calX |_\dee + \half \bar{\calX}|_{(\dee, \bar{U})} \right) + \bar{p} \cdot \left( \calP |_\dee + \half \bar{\calP}|_{(\dee, \bar{U})} \right) + \bar{u} \cdot \calU |_\dee + A \cdot \gamma \bigg\} \, dt \notag\\
      & \quad - \gamma|_0 \cdot x^0. \notag
 \end{align}
 \end{subequations}

 Substituting for $U^Q$ from \eqref{eq:QQR_DtP} for $U$ in $\scl_Q$, we have
\begin{equation*}
    \allowdisplaybreaks
    \begin{aligned}
        \scl_Q \Big(U^Q|_{(\dee,\bar{U})},\dee, \bar{U}, t \Big) & = \bigg(- \half + 1 \bigg) \bigg( \mathcal{J}|_{(\dee,\bar{U})} \cdot \K \big|_\dee^{-1} \mathcal{J}|_{(\dee,\bar{U})} + \frac{1}{a_u} \calU \big|_\dee \cdot \calU \big|_\dee \bigg) \notag\\
        & \quad - \bar{x} \cdot \left( \calX |_\dee + \half \bar{\calX}|_{(\dee, \bar{U})} \right) - \bar{p} \cdot \left( \calP |_\dee + \half \bar{\calP}|_{(\dee, \bar{U})} \right) - \bar{u} \cdot \calU |_\dee - A \cdot \gamma
    \end{aligned}
\end{equation*}
so that the \emph{QQR dual functional} is given by
\begin{subequations}
    \label{eq:QQR_dual}
    \begin{align}
        S_Q[D] & = \int_0^T \scl_Q \Big(U^Q|_{(\dee,\bar{U})},\dee, \bar{U}, t \Big) \, dt - \ \gamma|_0 \cdot x^0 \notag\\
        & = \half \int_0^T \bigg( \mathcal{J}|_{(\dee,\bar{U})} \cdot \K \big|_\dee^{-1} \mathcal{J}|_{(\dee,\bar{U})} + \frac{1}{a_u} \calU \big|_\dee \cdot \calU \big|_\dee \bigg) \, dt \tag{\ref{eq:QQR_dual}}\\
        & \quad - \int_0^T \bigg\{ \bar{x} \cdot \left( \calX |_\dee + \half \bar{\calX}|_{(\dee, \bar{U})} \right) + \bar{p} \cdot \left( \calP |_\dee + \half \bar{\calP}|_{(\dee, \bar{U})} \right) + \bar{u} \cdot \calU |_\dee + A \cdot \gamma \bigg\} \, dt \ - \  \gamma|_0 \cdot x^0. \notag
    \end{align}
\end{subequations}

We note from the DtP mapping \eqref{eq:QQR_DtP} that, for the field $D = 0$ so that $\dee(t) = 0 \ \forall \ t \in [0,T]$, $U^Q = \bar{U}$, and recall that the E-L equation of $S_Q$ is the QQR primal equation set \eqref{eq:primal_QQR} with the replacement $U \to U^Q$. Thus, if $\bar{U}$ was a solution of \eqref{eq:primal_QQR}, then $D=0$ is a critical point of $S_Q$.

If we now define
\begin{equation} \label{eq: def tildeS}
\tilde{S}_Q[D] \ := \ \sup_{U} \, \widehat{S}_Q [U,D] \ = \ \int_0^T \sup_{U} \, \scl_Q (U, \dee, \bar{U}, t) \, dt \ - \ \gamma|_0 \cdot x^0
\end{equation}
\eqref{eq:mixed_QQR_L} implies that
\begin{equation*}
\sup_{U} \, \scl_Q (U, \dee, t, \bar{U}) = \begin{cases} \scl_Q \Big(U^Q|_{(\dee,\bar{U})},\dee, \bar{U}, t \Big) &\mbox{ for any } \dee  \text{ s.t. }  \K(\dee)  \text{ is positive semi-definite} \\ &\text{ and } \mathcal{J}|_{(\dee,\bar{U})} \in im(\K(\dee))\\
+ \infty  &\text{ otherwise}.
\end{cases}
\end{equation*}
Indeed, first note that $\K(\dee)$ is a symmetric matrix. In particular, the above maximization problem above is concave if $\K(\dee)$ is positive semi-definite. Hence, every critical point is a maximizer, i.e.~every solution of \eqref{eq:QQR_DtP} is a maximizer of $U \to \mathcal{L}_Q(U,\dee,\bar{U},t)$. Next, if $\K(\dee)$ is only positive semi-definite then the expression $\K(\dee)^{-1}$ has to be interpreted as the inverse of the linear mapping restricted to the orthogonal complement of its kernel. This is well-defined as all solutions of \eqref{eq:QQR_DtP} return the same maximal value for $\scl_Q$. If $0\neq \mathcal{J}|_{(\dee,\bar{U})} \notin im(\K(\dee))$ then its projection onto the kernel of $\K(\dee)$, call it $U$, is non-zero as $\K(\dee)$ is symmetric. Then note that $\mathcal{L}_Q(sU + \bar{U},\dee,\bar{U},t) \rightarrow +\infty$ as $s \to -\infty$. Similarly, one can argue if $\K(\dee)$ is not positive semi-definite, i.e.~if $\K(\dee)$ has a negative eigenvalue. Thus,
\begin{equation}\label{eq: tildeSQ}
\tilde{S}_Q [D] = \begin{cases} S_Q [D] \ &\mbox{ for } D \mbox{ s.t. } \K(\dee(t)) \mbox{ is positive semi-definite and $\mathcal{J}|_{(\dee(t),\bar{U}(t))} \in im(\K(\dee(t)))$} \\ &\text{ a.e.~in } [0,T] \\
+ \infty &\mbox{ otherwise}.
\end{cases}
\end{equation}
As already noted, $\tilde{S}_Q$ is a \emph{convex} functional of the dual fields $D$.

\section{Coercivity of the dual minimization problem for the Quadratic-Quadratic Regulator (QQR)}\label{sec:analysis}

In this section we will discuss the coercivity of the dual functional $\tilde{S}_Q$ as defined in \eqref{eq: def tildeS} on an appropriate subspace of $H^1((0,T); \R^n) \times H^1((0,T); \R^n) \times L^2((0,T); \R^m)$. 
Given $\lambda_0 \in \R^n$ we set
\[
\mathcal{A}:=\{(\gamma,\lambda,\mu) \in H^1((0,T); \R^n) \times H^1((0,T); \R^n) \times L^2((0,T); \R^m): \lambda(0) = \lambda^0, G \lambda(T) = \gamma(T)  \}.
\]
Moreover, we define the function $g: \R^n \times \R^n \times \R^n \times \R^n \times \R^m \to [0,\infty]$ as
\begin{align*}
g(\gamma,\alpha,\lambda,\beta,\mu) = \sup_{x,p \in\R^n, u \in \R^m} &\bigg[-x_j \cdot ( \alpha_j + \gamma_i M_{ij} + \lambda_i B_{ij} ) - p_j ( \beta_j -  M_{ji} \lambda_i + N_{ji} \mu_i ) \\ & - u_j ( N_{ij} \gamma_i + C_{ji} \mu_i)  - \frac12\left( a_x |x |^2 + a_p |p|^2 + a_u |u|^2 \right) \\ & - \frac12 \gamma_i F_{irs} x_r x_s + p_j F_{jri} x_r \lambda_i \bigg].
\end{align*}
As indicated in \eqref{eq: def tildeS} in this setting it can be shown rigorously that it holds for all $(\gamma,\lambda,\mu) \in \mathcal{A}$
\begin{align*}
\tilde{S}_Q[\gamma,\lambda,\mu] &= \sup_{x,p \in L^2((0,T);\R^n), u\in L^2((0,T);\R^m)} \hat{S}_Q[(x,u,p),(\gamma,\lambda,\mu)] \\ &=\int_0^T g(\gamma,\dot{\gamma},\lambda,\dot{\lambda},\mu) + A \cdot \gamma\, ds - \gamma(0)\cdot x^0. 
\end{align*}
In particular, since $\hat{S}_Q$ is affine in $(\gamma,\lambda,\mu)$ the functional $\tilde{S}_Q$ is lower-semicontinuous with respect to weak convergence in $H^1((0,T);\R^n) \times H^1((0,T);\R^n) \times L^2((0,T);\R^m)$.
Hence, the coercivity result of this section, Proposition \ref{prop: coer}, guarantees that the direct method of the Calculus of Variations can be applied to establish the existence of a minimizer of $\tilde{S}_Q$. By the argumentation in Section \ref{sec:dual_vp} such a minimizer is then (at least formally) a solution to the primal equations \eqref{eq:control_EL}, c.f.~also the discussion in \cite[Section 5]{sga}.

We start by proving the following lower bound for $g$.

\begin{proposition}\label{prop: lb g}
    It holds for all $(\gamma,\alpha,\lambda,\beta,\mu) \in \R^n \times \R^n \times \R^n \times \R^n \times \R^m$ that
    \begin{align*}
    g(\gamma,\alpha,\lambda,\beta,\mu) \geq &\frac{\left| \alpha + M^T \gamma + B^T \lambda \right|^2}{2 (a_x + |F| |\gamma| + |F| |\lambda|)}  + \frac{\left| \beta - M \lambda + N \mu \right|^2}{2 ( a_p + |F| |\lambda|)}  + \frac{\left| N^T \gamma + C \mu \right|^2}{2a_u} .
    \end{align*}
    For $F = 0$ it holds 
    \[
        g(\gamma,\alpha,\lambda,\beta,\mu) = \frac{\left| \alpha + M^T \gamma + B^T \lambda \right|^2}{2 a_x}  + \frac{\left| \beta - M \lambda + N \mu \right|^2}{2  a_p }  + \frac{\left| N^T \gamma + C \mu \right|^2}{2a_u} .
    \]
\end{proposition}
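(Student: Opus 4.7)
The strategy is to write the expression inside the supremum as an affine-minus-quadratic form in $(x,p,u)$, dominate it pointwise from below by a diagonal concave quadratic, and then evaluate the supremum of that diagonal quadratic explicitly. To lighten the algebra I would relabel the linear coefficients: set $a := \alpha + M^T\gamma + B^T\lambda$, $b := \beta - M\lambda + N\mu$, and $c := N^T\gamma + C\mu$ (using $C = C^T$), so that the integrand becomes
\[
\Phi(x,p,u) \ = \ -a\cdot x \ - \ b\cdot p \ - \ c\cdot u \ - \ \tfrac12\bigl(a_x|x|^2 + a_p|p|^2 + a_u|u|^2\bigr) \ - \ \tfrac12 \gamma_i F_{irs} x_r x_s \ + \ p_j F_{jri}x_r \lambda_i.
\]
All coupling between the blocks $x,p,u$ is confined to the two $F$-terms, and these couple only $x$--$x$ and $x$--$p$.

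For $F = 0$ the quadratic form is already diagonal, so completing the square in $x$, $p$, $u$ separately, with optimizers $x^* = -a/a_x$, $p^* = -b/a_p$, $u^* = -c/a_u$, immediately yields the claimed identity for $g$.

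For general $F$, the plan is to dominate $\Phi$ pointwise from below by a diagonal concave quadratic and then take the supremum. Using the tensor-norm inequalities
\[
-\tfrac12 \gamma_i F_{irs} x_r x_s \ \geq \ -\tfrac12 |F||\gamma||x|^2, \qquad p_j F_{jri}x_r \lambda_i \ \geq \ -|F||\lambda||x||p| \ \geq \ -\tfrac12 |F||\lambda|\bigl(|x|^2 + |p|^2\bigr),
\]
where the second estimate is Cauchy--Schwarz followed by Young's inequality with parameter $\epsilon = 1$, one obtains $\Phi(x,p,u) \geq \Psi(x,p,u)$, where
\[
\Psi(x,p,u) \ := \ -a\cdot x \ - \ b\cdot p \ - \ c\cdot u \ - \ \tfrac12\widetilde{a}_x|x|^2 \ - \ \tfrac12\widetilde{a}_p|p|^2 \ - \ \tfrac12 a_u|u|^2,
\]
with $\widetilde{a}_x := a_x + |F||\gamma| + |F||\lambda|$ and $\widetilde{a}_p := a_p + |F||\lambda|$. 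Monotonicity of suprema gives $g \geq \sup_{x,p,u}\Psi$, and the latter is a one-variable-at-a-time computation yielding precisely the asserted right-hand side. The only delicate point is the symmetric choice $\epsilon = 1$ in Young's inequality: it is this choice that places $|F||\lambda|$ into the denominators of both the $|a|^2$ and $|b|^2$ terms as stated; any other choice would still give a valid lower bound but not the asserted one. No measurability or function-space issue arises, since the statement is pointwise in the arguments of $g$.
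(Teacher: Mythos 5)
Your proposal is correct and follows essentially the same route as the paper: bound the two $F$-dependent coupling terms from below by $-\tfrac12|F||\gamma||x|^2$ and (via Cauchy--Schwarz and Young with equal weights) $-\tfrac12|F||\lambda|(|x|^2+|p|^2)$, then evaluate the supremum of the resulting decoupled concave quadratic by completing the square in each block, with the $F=0$ case being an exact computation. No gaps.
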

\begin{proof}
    We estimate using Young's inequality
    \begin{align*}
    g(\gamma,\alpha,\lambda,\beta,\mu) &\geq \sup_{x,p \in\R^n, u \in \R^m} \bigg[-x_j \cdot ( \alpha_j + \gamma_i M_{ij} + \lambda_i B_{ij} ) - p_j ( \beta_j -  M_{ji} \lambda_i + N_{ji} \mu_i ) \\ & \qquad - u_j ( N_{ij} \gamma_i - C_{ji} \mu_i)   - \frac12\left( a_x |x |^2 + a_p |p|^2 + a_u |u|^2 \right) - \frac12 (|\gamma| + |\lambda|) \, |F| \, |x|^2 \\ & \qquad -  \frac12 |\lambda| \, |F| \, |p|^2 \bigg] \\
    &= \frac{\left| \alpha + M^T \gamma + B^T \lambda \right|^2}{2 (a_x + |F| |\gamma| + |F| |\lambda|)}  + \frac{\left| \beta - M \lambda + N \mu \right|^2}{2 ( a_p + |F| |\lambda|)}  + \frac{\left| N^T \gamma - C \mu \right|^2}{2a_u} .
\end{align*}
For $F =0$ the inequality above is an equality.
\end{proof}

Before we state a coercivity result for $\tilde{S}_Q$, let us briefly introduce some notation.
Given a matrix $R \in \R^{d\times d}$ and $t \in \R$ we denote by $e^{tR} := \sum_{k=0}^{\infty} \frac{t^k}{k!} R^k$ the usual exponential functions for matrices. 
Additionally, we denote by $\pi_1,\pi_2: \R^{n} \times \R^{n} \to \R^n$ the projections $\pi_1(x,y) = x$ and $\pi_2(x,y) = y$ for $(x,y) \in \R^n \times \R^n$. 
Lastly, we write $\iota: \R^n \to \R^{n} \times \R^n$ for the embedding $\iota(x) = (x,0) \in \R^n \times \R^n$, $x\in \R^n$.

\begin{proposition}\label{prop: coer}
Let the matrices $B,G,C,M,N, F$ be matrices as before and $T>0$. Additionally assume that $C$ is invertible. 
Set 
\begin{equation}\label{eq: defR}
R := \begin{pmatrix} -M^T && - B^T \\ N C^{-1}N^T &&  M \end{pmatrix} \in \R^{2n \times 2n}.
\end{equation}
Assume that the linear mapping 
\begin{equation} \label{eq: invert}
\R^n \ni x \rightarrow \pi_1 (e^{TR}  \iota(x)) + G \pi_2 (e^{TR} \iota(x)) 
\end{equation}
is invertible.

Then there exists $\delta>0$ such that whenever $|x^0| + \| A \|_{L^1} \leq \delta$ then the functional $\tilde{S}_Q$ is weakly coercive on $\mathcal{A}$, i.e., for every sequence $(\gamma^{(k)},\lambda^{(k)}, \mu^{(k)})_k \subseteq \mathcal{A}$ such that it holds $\sup_k \tilde{S}_Q[\gamma^{(k)},\lambda^{(k)}, \mu^{(k)}] < \infty$ there exists a subsequence that is weakly convergent in $\mathcal{A}$.  \\
If $F=0$ the dual functional $\tilde{S}_Q$ is coercive on $\mathcal{A}$ only under the assumption that $C$ is invertible and the invertibility of \eqref{eq: invert}.

\end{proposition}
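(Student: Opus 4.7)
The strategy is to use Proposition \ref{prop: lb g} to extract $L^2$ control of the three combinations
\[
\alpha := \dot{\gamma} + M^T\gamma + B^T\lambda, \quad \beta := \dot{\lambda} - M\lambda + N\mu, \quad \nu := N^T\gamma + C\mu,
\]
and to convert this into $H^1 \times H^1 \times L^2$ control of $(\gamma,\lambda,\mu)$ by viewing $(\gamma,\lambda)$ as the solution of a linear ODE system. Since $C$ is invertible, one can eliminate $\mu = C^{-1}(\nu - N^T\gamma)$, substitute into $\beta$, and check that $U := (\gamma,\lambda)^T$ satisfies $\dot{U} = R U + \xi$, with $R$ as in \eqref{eq: defR} and $\xi := (\alpha, \, \beta - N C^{-1}\nu)^T$. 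The definition of $\mathcal{A}$ fixes $\lambda(0) = \lambda^0$ and couples $\gamma(T)$ to $\lambda(T)$ via $G$.

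Next, Duhamel's formula $U(T) = e^{TR}\iota(\gamma(0)) + e^{TR}(0,\lambda^0)^T + \int_0^T e^{(T-s)R}\xi(s)\,ds$, combined with the terminal constraint in $\mathcal{A}$, turns the latter into a linear equation for $\gamma(0)$ whose coefficient on $\gamma(0)$ is (up to sign) exactly the map in \eqref{eq: invert}. The invertibility hypothesis therefore gives $|\gamma(0)| \leq C(|\lambda^0| + \|\xi\|_{L^1})$, and a further Duhamel estimate on $[0,t]$ yields $\|\gamma\|_{L^\infty} + \|\lambda\|_{L^\infty} \leq C(1 + \|\xi\|_{L^1})$. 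By Cauchy-Schwarz, $\|\xi\|_{L^1} \leq C\sqrt{T}\, \|(\alpha,\beta,\nu)\|_{L^2}$.

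Now consider a sequence $(\gamma^{(k)}, \lambda^{(k)}, \mu^{(k)}) \subset \mathcal{A}$ with $\sup_k \tilde{S}_Q \leq K$, and set $J_k^2 := \|\alpha^{(k)}\|_{L^2}^2 + \|\beta^{(k)}\|_{L^2}^2 + \|\nu^{(k)}\|_{L^2}^2$ and $E_k := 1 + \|\gamma^{(k)}\|_{L^\infty} + \|\lambda^{(k)}\|_{L^\infty}$. Using the trivial estimate $|\int_0^T A\cdot\gamma\,dt| + |\gamma(0)\cdot x^0| \leq (\|A\|_{L^1} + |x^0|)\|\gamma\|_{L^\infty}$ together with Proposition \ref{prop: lb g}, the case $F = 0$ produces
\[
K \geq c J_k^2 - C(|x^0| + \|A\|_{L^1})(1 + J_k),
\]
a quadratic-linear inequality that bounds $J_k$ unconditionally. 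The case $F \neq 0$ gives instead only $K \geq c\, E_k^{-1} J_k^2 - C(|x^0| + \|A\|_{L^1})\, E_k$; substituting $E_k \leq C'(1 + J_k)$ from the previous paragraph, the smallness $|x^0| + \|A\|_{L^1} \leq \delta$ with $\delta$ sufficiently small (depending on $c,C'$) is exactly what closes the bootstrap and yields a bound on $J_k$.

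Once $J_k$ is controlled, $\gamma^{(k)}$ and $\lambda^{(k)}$ are bounded in $L^\infty$ by the second paragraph; $\dot{\gamma}^{(k)}, \dot{\lambda}^{(k)}$ are then bounded in $L^2$ from the definitions of $\alpha, \beta$ and the linear terms in $\gamma, \lambda$; and $\mu^{(k)} = C^{-1}(\nu^{(k)} - N^T\gamma^{(k)})$ is bounded in $L^2$. This yields boundedness in $\mathcal{A}$ and hence weak compactness, which combined with the weak lower-semicontinuity of $\tilde{S}_Q$ noted before the proposition delivers a minimizer via the direct method. The main obstacle in this plan is precisely the $F \neq 0$ case: the quadratic nonlinearity in the primal state equation degrades the denominators in Proposition \ref{prop: lb g} into functions of $\|\gamma\|_{L^\infty} + \|\lambda\|_{L^\infty}$, and closing the inequality against the linear perturbations $A\cdot\gamma$ and $\gamma(0)\cdot x^0$ is what forces the smallness assumption on $|x^0| + \|A\|_{L^1}$.
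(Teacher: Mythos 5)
Your proposal is correct and follows essentially the same route as the paper's proof: the same three residuals $\alpha,\beta,\nu$, the same rewriting of $(\gamma,\lambda)$ as a Duhamel solution of $\dot U = RU + \xi$, the same use of the terminal constraint plus invertibility of \eqref{eq: invert} to control $\gamma(0)$, and the same smallness-of-$\delta$ bootstrap for $F\neq 0$ (you close the inequality in $J_k$ where the paper closes it in the $L^\infty$ norms via Young's inequality, but this is only a cosmetic reorganization). The $F=0$ branch is likewise identical to the paper's.
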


\begin{remark}
    Let us briefly comment on the invertibility condition \eqref{eq: invert}. We write 
    \[
    \rho := \dot{\gamma} + M^T \gamma + B^T \lambda \text{ and } \sigma := \dot{\lambda} - M \lambda + N\mu = \dot{\lambda} - M \lambda - N C^{-1} N^T\gamma + NC^{-1} ( N^T \gamma + C \mu).
    \]
    Hence, the functions $\gamma, \lambda$ solve the ODE
    \[
    \begin{pmatrix}
        \dot{\gamma} \\ \dot{\lambda}
    \end{pmatrix} = R \begin{pmatrix}
        \gamma \\ \lambda
    \end{pmatrix} + \begin{pmatrix}
        \rho \\ \sigma - NC^{-1}(N^T \gamma + C \mu)
    \end{pmatrix},
    \]
    where $R$ is the matrix in \eqref{eq: defR}. 
    By Proposition \ref{prop: lb g} it seems reasonable that for admissible functions $(\gamma,\lambda,\mu) \in \mathcal{A}$ with a bounded energy $\tilde{S}_Q[\gamma,\lambda,\mu]$ one might hope to control the functions $\rho$, $\sigma$ and $N^T \gamma + C \mu$ so that it holds approximately
    \[
    \begin{pmatrix}
        \gamma(t) \\ \lambda(t)
    \end{pmatrix} \approx e^{tR} \begin{pmatrix}
        \gamma(0) \\ \lambda(0)
    \end{pmatrix}.
    \]
    Then the invertibility of the mapping in \eqref{eq: invert} is exactly the condition that allows to control $\gamma(0)$ through $\lambda(0) = \lambda_0$ using that $\gamma(T) = G \lambda(T)$. In turn this will imply bounds on the functions $\gamma$ and $\lambda$ in $H^1((0,T);\R^n)$.
\end{remark}

\begin{proof}
Let $(\gamma^{(k)},\lambda^{(k)}, \mu^{(k)})_k \subseteq \mathcal{A}$ be a sequence satisfying $\sup_k \tilde{S}_Q[\gamma^{(k)},\lambda^{(k)}, \mu^{(k)}] \leq K$, for some $K>0$.
By the usual weak compactness properties of the spaces $H^1$ and $L^2$ it suffices to prove that the sequence $(\gamma^{(k)},\lambda^{(k)}, \mu^{(k)})_k$ lies in a bounded subset of $H^1(0,T; \R^m) \times H^1(0,T; \R^m) \times L^2((0,T); \R^m)$.

Throughout the proof $\overline{c}>0$ will denote a constant which does not depend on $\gamma^{(k)}$, $\lambda^{(k)}$ nor $\mu^{(k)}$ but may change from line to line.

Defining $\sigma^{(k)} = \dot{\gamma}^{(k)} + M^T\gamma^{(k)} + B^T \lambda^{(k)}$ and $\rho^{(k)} = \dot{\lambda}^{(k)} - M \lambda^{(k)}  + N \mu^{(k)}$ we obtain from Proposition \ref{prop: lb g} for a constant $c>0$ (depending on $a_x, a_p$ and $|F|$)
\begin{align}
&K \geq \tilde{S}_Q[\gamma^{(k)},\lambda^{(k)}, \mu^{(k)}]  \label{eq: est coer dual} \\
\geq &\int_0^T \left( \frac{|\sigma^{(k)}|^2}{c(1 + |\lambda^{(k)}| + |\gamma^{(k)}|)} + \frac{|\rho^{(k)} |^2}{c(1 + |\lambda^{(k)}| + |\gamma^{(k)}|)} + \frac1{2a_u} |N^T\gamma^{(k)} + C \mu^{(k)}|^2 + A\cdot \gamma^{(k)} \right)  \, dt - \gamma^{(k)}(0)\cdot x^0 \nonumber \\
\geq &\int_0^T \left( \frac{|\sigma^{(k)}|^2}{c(1 + |\lambda^{(k)}| +  |\gamma^{(k)}|)} + \frac{|\rho^{(k)} |^2}{c(1 + |\lambda^{(k)}| + |\gamma^{(k)}|)} + \frac1{2a_u} |N^T\gamma^{(k)} + C \mu^{(k)}|^2 \right)  \, dt \nonumber  \\
& - \|A\|_{L^1} \| \gamma^{(k)}\|_{L^{\infty}} - |\gamma^{(k)}(0)| \, |x^0| \nonumber \\
\geq &\int_0^T \left( \frac{|\sigma^{(k)}|^2}{c(1 + |\lambda^{(k)}| +  |\gamma^{(k)}|)} + \frac{|\rho^{(k)} |^2}{c(1 + |\lambda^{(k)}| +  |\gamma^{(k)}|)} + \frac1{2a_u} |N^T\gamma^{(k)} + C \mu^{(k)}|^2 \right)  \, dt - \delta \| \gamma^{(k)} \|_{L^{\infty}}. \nonumber
\end{align}
Next, we write
\begin{equation}\label{eq: rep derivative}
\begin{pmatrix}
    \dot{\gamma}^{(k)} \\ \dot{\lambda}^{(k)}
\end{pmatrix} = \underbrace{\begin{pmatrix} -M^T && - B^T \\ N C^{-1}N^T &&  M \end{pmatrix}}_{= R} \begin{pmatrix} \gamma^{(k)} \\ \lambda^{(k)} \end{pmatrix} - \begin{pmatrix} 0 \\ N C^{-1} ( N^T\gamma^{(k)} + C\mu^{(k)}) \end{pmatrix} +  \begin{pmatrix} \sigma^{(k)} \\ \rho^{(k)} \end{pmatrix}.
\end{equation}
This ODE is solved uniquely by 

\begin{equation} 
\begin{pmatrix} \gamma^{(k)}(t) \\ \lambda^{(k)}(t) \end{pmatrix} = e^{tR} \begin{pmatrix} \gamma^{(k)}(0) \\ \lambda^{(k)}(0) \end{pmatrix} + e^{tR} \int_0^t e^{-sR} \begin{pmatrix} \sigma^{(k)}(s) \\ - N^TC^{-1} ( N \gamma^{(k)}(s) + C\mu^{(k)}(s)) + \rho^{(k)}(s) \end{pmatrix} \, ds \label{eq: sol ode}
\end{equation}
Writing the initial condition in the form $\begin{pmatrix} \gamma^{(k)}(0)& \lambda^{(k)}(0)\end{pmatrix}^T$ = $\begin{pmatrix} 0 & \lambda^{(k)}(0)\end{pmatrix}^T + \begin{pmatrix} \gamma^{(k)}(0) & 0\end{pmatrix}^T$ and using $\gamma^{(k)}(T) = G \lambda^{(k)}(T)$ and $\lambda^{(k)}(0) = \lambda^0$ in \eqref{eq: sol ode} with some rearrangement of terms, we find that
\begin{equation}
\begin{aligned}
&    \pi_1 (e^{TR} \iota (\gamma^{(k)}(0))) - G \pi_2 (e^{TR} \iota(\gamma^{(k)}(0))) \\ 
= & - (\pi_1 - G \circ \pi_2) \left( e^{TR} \begin{pmatrix} 0 \\ \lambda^0 \end{pmatrix} + e^{TR} \int_0^T e^{-sR} \begin{pmatrix} \sigma^{(k)}(s) \\ - N C^{-1} ( N^T\gamma^{(k)}(s) + C\mu^{(k)}(s)) + \rho^{(k)}(s) \end{pmatrix} \, ds \right).  
\end{aligned}
\end{equation}
By the invertibility of the mapping in \eqref{eq: invert} we obtain 
\begin{align}\label{eq: est gamma0}
    |\gamma^{(k)}(0)| 
    \leq \overline{c} \left( |\lambda^0| + \int_0^T |\sigma^{(k)}| + |\rho^{(k)}| + |N^T\gamma^{(k)} + C \mu^{(k)}| \, ds \right)
\end{align}
Combining this with \eqref{eq: sol ode}, we find using \eqref{eq: est coer dual} and H\"older's inequality 
\begin{align*}
    &\| \gamma^{(k)} \|_{L^{\infty}} + \| \lambda^{(k)} \|_{L^{\infty}} \\
    \leq &\overline{c} \left( |\gamma^{(k)}(0)| +  |\lambda^0| + \int_0^T |\sigma^{(k)}| + |\rho^{(k)}| + |N^T\gamma^{(k)} + C \mu^{(k)}| \, ds \right) \\
    \leq &\overline{c} \left(|\lambda^0| + \| N^T \gamma^{(k)} + C \mu^{(k)}\|_{L^2} +  \left( \int_0^T \frac{|\sigma^{(k)}|^2 + |\rho^{(k)}|^2 }{1+|\lambda^{(k)}| + |\gamma^{(k)}|} \, dt \right)^{1/2} (1 + \| \lambda^{(k)} \|_{L^{\infty}} + \| \gamma^{(k)} \|_{L^{\infty}})^{1/2} \right)
    \end{align*}
Using \eqref{eq: est coer dual} and Young's inequality it follows
    \begin{align*}
        &\| \gamma^{(k)} \|_{L^{\infty}} + \| \lambda^{(k)} \|_{L^{\infty}} \\
    \leq \  &\overline{c} \left(|\lambda^0| + (K + \delta \| \gamma^{(k)}\|_{L^{\infty}})^{1/2} +  \left( K + \delta \| \gamma^{(k)}\|_{L^{\infty}} \right)^{1/2}  (1 + \| \lambda^{(k)} \|_{L^{\infty}} + \| \gamma^{(k)} \|_{L^{\infty}})^{1/2}\right) \\
\leq \  &\overline{c} \left( |\lambda^0| + 1 +  K + \delta \| \gamma^{(k)}\|_{L^{\infty}} + \delta^{-1/2}( K + \delta \| \gamma^{(k)} \|_{L^{\infty}} ) + \delta^{1/2} ( 1 + \| \lambda^{(k)} \|_{L^{\infty}} + \| \gamma^{(k)} \|_{L^{\infty}}) \right).
\end{align*}
Hence, if 
\[
\overline{c}(2\delta^{1/2}+\delta) \leq \frac12,
\]
we find 
\[
        \| \gamma^{(k)} \|_{L^{\infty}} + \| \lambda^{(k)} \|_{L^{\infty}} \leq 2 \overline{c} \left( |\lambda^0| + 1 + \delta^{1/2} + K + \delta^{-1/2} K\right).
\]

In particular, $\lambda^{(k)}$ and $\gamma^{(k)}$ are uniformly bounded in $L^{\infty}$. By \eqref{eq: est coer dual} this implies that the functions $\rho^{(k)}$ and $\sigma^{(k)}$ are uniformly bounded in $L^2$. Consequently, 
\begin{align*} 
&\sup_k \| \dot{\gamma}^{(k)} \|_{L^2} + \| \dot{\lambda}^{(k)} \|_{L^2}  \\ \leq &\sup_k \overline{c} \left( \| \lambda^{(k)} \|_{L^2} + \| \gamma^{(k)} \|_{L^2} + \| \rho^{(k)} \|_{L^2} + \| \sigma^{(k)} \|_{L^2} +  \| N^T \gamma^{(k)} + C \mu^{(k)}\|_{L^2} \right) < \infty.
\end{align*}
This shows that the sequences $(\gamma^{(k)})_k, (\lambda^{(k)})_k$ are uniformly bounded in $H^1((0,T);\R^n)$. The uniform boundedness of $(\mu^{(k)})_k$ in $L^2(\Omega;\R^m)$ then follows from \eqref{eq: est coer dual}. 

Let us now consider the special case $F=0$. 
First, note that by Proposition \ref{prop: lb g} it holds in the notation from above similarly to \eqref{eq: est coer dual} that
\begin{equation} \label{eq: est dual F0}
\int_0^T \frac1{2a_u} |\sigma^{(k)}|^2 + \frac1{2a_p} |\rho^{(k)} |^2 + \frac1{2a_u} |N^T\gamma^{(k)} + C \mu^{(k)}|^2  \, dt \leq K + (|x^0| + \| A \|_{L^1}) \| \gamma^{(k)} \|_{L^{\infty}}. 
\end{equation}
By \eqref{eq: est gamma0} it follows that
\[
|\gamma^{(k)}(0)| \leq \overline{c} \left( |\lambda^0| + ( K + (|x^0| + \| A \|_{L^1}) \| \gamma^{(k)} \|_{L^{\infty}})^{1/2} \right).
\]
In turn, using \eqref{eq: sol ode} and \eqref{eq: est dual F0} it follows
\begin{align}
     &\| \gamma^{(k)} \|_{L^{\infty}} + \| \lambda^{(k)} \|_{L^{\infty}} \\
    \leq &\overline{c} \left( |\gamma^{(k)}(0)| +  |\lambda^0| + \int_0^T |\sigma^{(k)}| + |\rho^{(k)}| + |N^T\gamma^{(k)} + C \mu^{(k)}| \, ds \right) \\
    \leq &\overline{c} \left( |\lambda^0| + ( K + (|x^0| + \| A \|_{L^1}) \| \gamma^{(k)} \|_{L^{\infty}})^{1/2} \right).
\end{align}
This implies that $\sup_k \| \gamma^{(k)} \|_{L^{\infty}} + \| \lambda^{(k)} \|_{L^{\infty}} < \infty$. The rest of the proof then concludes analogously to before.

\end{proof}

\begin{remark}
Let us remark that the invertibility assumption for $C$ can be slightly weakened.
Assume that it holds $\text{ker}(C) \subseteq \text{ker}(N)$ and $\text{im}(N^T) \subseteq \text{im}(C)$. Then, it holds for all $x,y \in \R^m$ that $Cx = Cy$ implies that $Nx = Ny$. Hence, for all $z \in \text{im}(C)$ one can define the linear mapping $NC^{-1}$ in a well-defined manner  as $NC^{-1}z = Nx$, where $Cx = z$. By definition it follows that $N C^{-1} C\mu = N\mu$. In addition, since $\text{im}(N^T) \subseteq \text{im}(C)$ also the linear mapping $NC^{-1} N^T$ is well-defined. Using this in the definition of the matrix $R$ in \eqref{eq: defR} the proof above shows that the coercivity of $\tilde{S}_Q$ still holds. 

\end{remark}

In the following we will argue that coercivity of the dual functional $\tilde{S}_Q$ might fail if the mapping \eqref{eq: invert} is not invertible or if $|x^0|$ or $\|A \|_{L^1}$ is too large.

\begin{remark}
\begin{enumerate}
    \item Let $F=0$, $A=0$, $\lambda^0 = 0$ and assume that the mapping \eqref{eq: invert} is not invertible.
    Then there exists $\gamma_0 \in \R^m \setminus \{ 0\}$ such that $x^0 \cdot \gamma_0 \leq 0$ and 
    \[
    \pi_1 (e^{TR} \iota(\gamma_0)) - G \pi_2 (e^{TR} \iota (\gamma_0)) = 0.
    \]
    Then define the sequence of functions
    \[
    \begin{pmatrix} \gamma^{(k)}(t) \\ \lambda^{(k)}(t) \end{pmatrix} = e^{tR} \begin{pmatrix} k \gamma_0 \\ 0  \end{pmatrix}
    \]
    and $\mu^{(k)} = -C^{-1} N^T \gamma^{(k)}$.
    It follows that 
    \[
    \gamma^{(k)}(T) - G\lambda^{(k)}(T)   = \pi_1\left( e^{TR} \begin{pmatrix} k \gamma_0 \\ 0  \end{pmatrix}\right) - G \pi_2\left( e^{TR} \begin{pmatrix} k \gamma_0 \\ 0  \end{pmatrix}\right) = 0.
    \]
    Moreover,
    \[
    \begin{pmatrix} \dot{\gamma}^{(k)}(t) \\ \dot{\lambda}^{(k)}(t) \end{pmatrix} = R \begin{pmatrix} \gamma^{(k)}(t) \\ \lambda^{(k)}(t) \end{pmatrix}
    \]
    and therefore
    \begin{align*}
\dot{\gamma}^{(k)} + M^T \gamma^{(k)} + B^T \lambda^{(k)} = \dot{\lambda}^{(k)} - M \lambda^{(k)} + N^T \mu^{(k)} = 0 \text{ and } N^T \gamma^{(k)} + C\mu^{(k)} = 0,
    \end{align*}
    which implies by Proposition \ref{prop: lb g} that $g(\gamma^{(k)},\dot{\gamma}^{(k)},\lambda^{(k)},\dot{\lambda}^{(k)}, \mu^{(k)}) = 0$, i.e.,
    \[
    \tilde{S}_Q[\gamma^{(k)},\lambda^{(k)},\mu^{(k)}] = \gamma^{(k)}(0)\cdot x^0 = k \gamma_0 \cdot x^0 \leq 0.
    \]
    Hence, $\sup_k \tilde{S}_Q[\gamma^{(k)},\lambda^{(k)},\mu^{(k)}] < \infty$ but the sequence $(\gamma^{(k)},\lambda^{(k)}, \mu^{(k)})_k$ is not bounded in  $H^1(0,T; \R^m) \times H^1(0,T; \R^m) \times L^2((0,T); \R^m)$, i.e., the dual functional $\tilde{S}_Q$ is not coercive.
\item  Let $n=m=2$, $B=C=G= \begin{pmatrix} 1&& 0 \\ 0 && 1 \end{pmatrix}$, $M=0$, $F_{ijr} = \delta_{jr}$, $N = \begin{pmatrix} 1 && 0 \\ 0 && 0\end{pmatrix}$, $\lambda^0 = 0$ and $a_x=a_p=a_u=1$. 
 For $k \in \N$ we set $\lambda^{(k)} = 0$, $\gamma^{(k)}_1 = 0$, $\gamma^{(k)}_2(t) = k( T-t)^2$ and $\mu^{(k)} = 0$.
    Then $\gamma^{(k)}(T) + G \lambda^{(k)}(T) = 0$ and $\lambda^{(k)}(0) = 0 = \lambda^0$. 
    Moreover, $\dot{\gamma}^{(k)}_2 = 2 k (t-T)$ and $N^T\gamma^{(k)} + C \mu^{(k)} = 0$.
    Then compute 
    \begin{align*}
&\tilde{S}_Q[\gamma^{(k)},\lambda^{(k)},\mu^{(k)}] \\ 
= & \int_0^T \sup_{x,p,u\in \R^2} - \dot{\gamma}^{(k)}(t) \cdot x -\frac12(\gamma^{(k)}_2(t) + 1) x^2  - A(t)\cdot \gamma^{(k)}(t) \, dt - \gamma^{(k)}(0)\cdot x^0 \\
= &\int_0^T \frac{\left| \dot{\gamma}^{(k)}  \right|^2}{2(\gamma^{(k)}_2(t) + 1)}  - A(t) \cdot \gamma^{(k)}(t) \, dt - \gamma^{(k)}(0)\cdot x^0 \\
=& \int_0^T \frac{4k^2 |t-T|^2}{2 k (T-t)^2 + 2}  - A_2(t) k (T-t)^2 \, dt - x^0_2 \, k T^2 \\
\leq& kT \left(2 - x^0_2T -  \int_0^T A_2(t) \frac{(t-T)^2}T \, dt\right).
    \end{align*}
Hence, if $x^0_2 \gg 1$ or for specific choices of $A$ with $\|A_2\|_{L^1} \gg 1$ we find that $\tilde{S}_Q[\gamma^{(k)},\lambda^{(k)},\mu^{(k)}] \to -\infty$ for $k\to \infty$. In particular, $\tilde{S}_Q$ is not coercive.

Eventually, we check that the mapping \eqref{eq: invert} is invertible.
Using that $N^T = N$ and $N^2=N$ it holds by definition \eqref{eq: defR} that  $R = \begin{pmatrix}
    0 && - Id_2 \\ N && 0
\end{pmatrix}$, where $Id_2 \in \R^{2\times 2}$ denotes the identity matrix.
Using that $N^2 = N$ one can then show for $l\geq 1$ that
\[
R^{2l} = (-1)^l \begin{pmatrix}
    N && 0 \\ 0 && N
\end{pmatrix} \text{ and } R^{2l+1} = (-1)^l \begin{pmatrix}
    0 && N \\ -N && 0
\end{pmatrix}.
\]
Consequently, it follows 
\begin{align*}
&e^{TR} \\ = &\begin{pmatrix}
    Id_2 && 0  \\ 0 && Id_2
\end{pmatrix} + t \begin{pmatrix}
    0 && - Id_2 \\ N && 0
\end{pmatrix} + \sum_{l=1}^{\infty} (-1)^l \frac{t^{2l}}{(2l)!} \begin{pmatrix}
    N && 0 \\ 0 && N
\end{pmatrix} + \sum_{l=1}^{\infty} (-1)^l \frac{t^{2l+1}}{(2l+1)!} \begin{pmatrix}
    0 && N \\ -N && 0
\end{pmatrix} \\
= &\begin{pmatrix}
    Id_2 && 0  \\ 0 && Id_2
\end{pmatrix} + t \begin{pmatrix}
    0 && - Id_2 \\ N && 0
\end{pmatrix} + (\cos(t)-1) \begin{pmatrix}
    N && 0 \\ 0 && N
\end{pmatrix}+  (\sin(t) - t) \begin{pmatrix}
    0 && N \\ -N && 0
\end{pmatrix}.
\end{align*}
It follows for $x = \begin{pmatrix} x_1 \\ x_2 \end{pmatrix} \in \R^2$ that
\begin{align}
    &\pi_1 (e^{TR}  \iota(x)) + G \pi_2 (e^{TR} \iota(x)) \\
    =& \begin{pmatrix} (\cos(T) - \sin(T) + 2T) x_1 \\ x_2  \end{pmatrix}.
\end{align}
Since $\cos(T) - \sin(T) + 2T \geq 1$ for all $T \geq 0$ it follows that the mapping in \eqref{eq: invert} is invertible.

\end{enumerate}
\end{remark}

On the other hand, if a solution to the primal ODE exists then the dual functional cannot be unbounded from below.

\begin{proposition}
    If a solution to the primal ODE \eqref{eq:primal_QQR}, \eqref{eq:primal_QQR_bc} exists then the dual functional $\tilde{S}_Q$ is bounded from below.
\end{proposition}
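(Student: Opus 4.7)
The plan is to exploit the fact that $\tilde{S}_Q[D] = \sup_U \widehat{S}_Q[U,D]$, and hence for any fixed choice of $U$ one has $\tilde{S}_Q[D] \geq \widehat{S}_Q[U,D]$. The key observation is that if one chooses $U$ to be a solution of the primal system, then $\widehat{S}_Q[U,D]$ becomes a constant independent of $D$, giving the desired lower bound.

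Concretely, let $U^* = (x^*, u^*, p^*)$ denote a solution to \eqref{eq:primal_QQR}--\eqref{eq:primal_QQR_bc}. Recall that after imposition of the boundary constraint $\gamma|_T = G\lambda|_T$ the pre-dual takes the form $\widehat{S}_Q[U,D] = \int_0^T \scl_Q(U,\dee,\bar{U},t)\,dt - \gamma_i|_0 x^0_i$. Substituting $U^*$, I will integrate by parts the two terms $-x^*_i \dot\gamma_i$ and $-p^*_i \dot\lambda_i$ in the integrand, producing time derivatives $\dot{x}^*_i \gamma_i$ and $\dot{p}^*_i \lambda_i$ together with boundary contributions $-x^*_i \gamma_i\big|_0^T$ and $-p^*_i \lambda_i\big|_0^T$. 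Grouping the resulting volume terms by $\gamma_i$, $\lambda_i$ and $\mu_\alpha$ factors, the coefficient of each dual field is precisely the left-hand side of one of the three primal equations in \eqref{eq:primal_QQR} evaluated at $U^*$, hence vanishes identically. The only volume contribution that survives is $-Q(x^*,u^*,p^*)$, which does not involve $D$.

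For the boundary terms, I will use the primal terminal condition $p^*_i(T) = -G_{ij} x^*_j(T)$ together with the dual constraint $\gamma_i(T) = G_{ij}\lambda_j(T)$ and the symmetry of $G$ to see that the two terminal contributions cancel: $-x^*_i(T)\gamma_i(T) - p^*_i(T)\lambda_i(T) = -x^*_i(T) G_{ij}\lambda_j(T) + G_{ij} x^*_j(T)\lambda_i(T) = 0$. The initial-time terms similarly collapse using $x^*(0) = x^0$ and $\lambda(0) = \lambda^0$: the term $x^*_i(0)\gamma_i(0) - \gamma_i(0) x^0_i$ vanishes, while $p^*_i(0)\lambda_i(0) = p^*_i(0)\lambda^0_i$ is a constant depending only on the primal solution and the prescribed $\lambda^0$.

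Combining these observations yields
\[
\widehat{S}_Q[U^*, D] = -\int_0^T Q(x^*,u^*,p^*)\,dt + p^*_i(0)\lambda^0_i,
\]
which is finite and independent of $D$. Therefore $\tilde{S}_Q[D] \geq \widehat{S}_Q[U^*,D] = -\int_0^T Q(x^*,u^*,p^*)\,dt + p^*(0)\cdot \lambda^0$ for every admissible $D$, proving the proposition. No serious obstacle is expected: the computation is a direct integration-by-parts check, and the main point is simply to notice that the pre-dual is affine in $D$ once the primal ODEs are used to annihilate the coefficients of $\gamma$, $\lambda$, $\mu$.
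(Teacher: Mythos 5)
Your proposal is correct and follows essentially the same route as the paper: bound $\tilde{S}_Q[D]=\sup_U \widehat{S}_Q[U,D]$ from below by evaluating $\widehat{S}_Q$ at a primal solution $U^*$, then use integration by parts, the primal equations, and the boundary/terminal conditions to see that all $D$-dependence cancels, leaving the constant $-\int_0^T Q(x^*,u^*,p^*)\,dt$. Your version is in fact slightly more careful in spelling out the boundary cancellations and in retaining the additional constant $p^*(0)\cdot\lambda^0$ (which the paper's final display omits, implicitly working with $\bar{U}=0$ and $\lambda^0=0$), but this does not change the argument or the conclusion.
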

\begin{proof}
    Let $x,p,u$ denote the solution to \eqref{eq:primal_QQR} and \eqref{eq:primal_QQR_bc}. Then it holds for $\gamma, \lambda, \mu$ that
    \begin{align*}
        \tilde{S}_Q[\gamma,\lambda,\mu] &= \int_0^T g(\gamma,\dot{\gamma}, \lambda, \dot{\lambda}, \mu) \, - A_i \cdot \gamma_i dt - x^0 \cdot \gamma(0) \\
        &\geq \int_0^T  \Big\{ - \gamma_i  A_i - x_i \dot{\gamma}_i - \gamma_i M_{ij} x_j - \gamma_i N_{i \alpha} u_\alpha - \gamma_i \half F_{irs} x_r x_s \\
        & \qquad \qquad  - p_i \dot{\lambda}_i - \lambda_i B_{ij} x_j + p_j M_{ji} \lambda_i + p_j F_{jri} x_r \lambda_i\\
        & \qquad \qquad +  p_i N_{i \alpha} \mu_\alpha -  u_\beta C_{\beta \alpha} \mu_\alpha\\
        & \qquad \qquad - \sup{x,p,u} \half \Big( a_x |x |^2 + a_u |u |^2 + a_p |p |^2 \Big) \ \Big\} \, dt - \gamma_i|_0 x^0_i. \\
        &= - \int_0^T \half \Big( a_x  |x |^2 + a_u |u |^2 + a_p |p |^2 \Big) \, dt.
    \end{align*}
\end{proof}
\section{The forced Linear-Quadratic Regulator (LQR)}\label{sec:LQR}
The dual formulation of the LQR problem is obtained from the dual QQR by setting $F = 0$. Define
\begin{equation*}
    \mathcal{J}^l \Big|_\dee : = \begin{Bmatrix} \calX|_\dee  \\ \\ \calP|_\dee  \end{Bmatrix} \; \qquad \K^l \Big|_\dee := \begin{bmatrix} a_x I & 0  \\
                                                                  &   \\
                                                                  0 & a_p I \end{bmatrix}; \qquad I \mbox{ is the } n \times n \mbox{ identity matrix}.\notag\\
\end{equation*}
The \emph{LQR dual functional} is then given by
\begin{subequations}
    \label{eq:LQR_dual}
    \begin{align}
        S_Q^l[D] & := \half \int_0^T \bigg( \mathcal{J}^l \Big|_{(\dee,\bar{U})} \cdot \K^l \Big|_\dee^{-1} \mathcal{J}^l \big|_{(\dee,\bar{U})} \ + \ \frac{1}{a_u} \calU \big|_\dee \cdot \calU \big|_\dee \bigg) \, dt \notag \\
        & \quad - \int_0^T \Big\{ \bar{x} \cdot \calX |_\dee  \ + \ \bar{p} \cdot \calP |_\dee  \ + \ \bar{u} \cdot \calU |_\dee \ + \ A \cdot \gamma \Big\} \, dt \ - \  \gamma|_0 \cdot x^0 \notag \\
        & \qquad \qquad = \tag{\ref{eq:LQR_dual}}\\
        & \quad \half \int_0^T \bigg( \frac{1}{a_x} \calX |_\dee \cdot \calX |_\dee \ + \ \frac{1}{a_p} \calP |_\dee \cdot \calP |_\dee \ + \  \frac{1}{a_u} \calU |_\dee \cdot \calU |_\dee \bigg) \, dt  \notag\\
        & \quad - \int_0^T \Big\{ \bar{x} \cdot \calX |_\dee  \ + \ \bar{p} \cdot \calP |_\dee  \ + \ \bar{u} \cdot \calU |_\dee \ + \ A \cdot \gamma \Big\} \, dt \ - \  \gamma|_0 \cdot x^0. \notag
    \end{align}
\end{subequations}
Its first variation, in a direction $\delta D$,  generates the weak form of the LQR problem \eqref{eq:primal_QQR} with $F = 0$ and is given by
\begin{equation}
    \label{eq:LQR_dual_first_var}
    \begin{aligned}
        \delta S^l_Q\big|_{\delta D} [D] & = \int_0^T  \left( \frac{1}{a_x}  \calX|_{\delta \dee} \cdot \calX|_\dee \ + \ \frac{1}{a_u} \calU|_{\delta \dee} \cdot \calU|_\dee
    + \ \frac{1}{a_p} \calP|_{\delta \dee} \cdot \calP|_\dee \right) \, dt \\
    &  \quad - \int_0^T \Big( \bar{x} \cdot \calX|_{\delta \dee} \ + \ \bar{u} \cdot \calU|_{\delta \dee} \ + \bar{p} \cdot \calP|_{\delta \dee} + A \cdot \delta \gamma \Big) \, dt \  - \ \delta \gamma|_0 \cdot x^0;
    \end{aligned}
\end{equation}
 Noting that $\dee$ is naturally viewed as a linear operator on the space, say 
 \[
 Y = \big\{ D \, | \, D:[0,T] \to \R^{n+m+m}, (\gamma, \lambda) \in H^1 \big([0,T],\R^{2n}\big), \mu \in L^{2}([0,T],\R^m) , D \mbox{ satisfies } \eqref{eq:gamma_bc} \big\}
 \]
 of admissible dual functions $D$,  and $(\calX, \calU, \calP)$ is a linear function on $\dee$, the second variation of $S^l_Q$, in the pair of directions $(\delta D, d D) \in Y \times Y$, is a \emph{symmetric, positive semi-definite bilinear} operator in $(\delta D, d D)$ independent of $D$,  and given by
 \begin{equation}\label{eq:def_L}
     d \delta S^l_Q\big|_{(\delta D, dD)} [D] \ = \ \int_0^T  \left( \frac{1}{a_x}  \calX|_{\delta \dee} \cdot \calX|_{d \dee} \ + \ \frac{1}{a_u} \calU|_{\delta \dee} \cdot \calU|_{d \dee}
    + \ \frac{1}{a_p} \calP|_{\delta \dee} \cdot \calP|_{d \dee} \right) \, dt. =: \mathbb{L} (\delta D, d D).
 \end{equation}
 
At a critical point $D$ of the functional the first variation must vanish for all variations $\delta D$ consistent with the boundary conditions \eqref{eq:gamma_bc}. If $(\bar{x},\Bar{u},\bar{p})$ is a solution to (\ref{eq:primal_QQR}-\ref{eq:primal_QQR_bc}) for $F = 0$, then the second line in \eqref{eq:LQR_dual_first_var} must vanish for all admissible variations. Then it is clear that $D = (\gamma, \mu, \lambda) = 0$ is a critical point. In general, the choice of a guiding base state $\bar{U} = (\bar{x}, \bar{u}, \bar{p})$ act as forcing to the problem, as do the boundary conditions \eqref{eq:gamma_bc}.

Let us define a linear operator on $Y$
\[
\mathsf{l}(D;A,x^0,\bar{U}) := \int_0^T \Big\{ \bar{x} \cdot \calX |_\dee  \ + \ \bar{p} \cdot \calP |_\dee  \ + \ \bar{u} \cdot \calU |_\dee \ + \ A \cdot \gamma \Big\} \, dt \ + \  \gamma|_0 \cdot x^0.
\]
Invoking a finite set of linearly independent functions $\mathsf{B} = \{ \phi_A \in Y, A = 1, \ldots, N \}$ and writing 
\[
D = D_B \phi_B, \qquad \delta D = \delta D_A \phi_A,
\]
a discrete linear algebra based approximation of the  LQR primal problem \eqref{eq:primal_QQR} with $F = 0$ is given by the following finite dimensional approximation of \eqref{eq:LQR_dual_first_var} (using \eqref{eq:def_L}):
\[
M_{AB} D_B = f_A, \qquad \mathbb{L}(\phi_A, \phi_B) =: M_{AB}; \qquad f_A =: \mathsf{l}(\phi_A; A,x^0, \bar{U}); \quad A, B = 1, \ldots, N.
\]

\emph{We note that the matrix $M$ is symmetric and does not depend on the initial condition $x^0$}. Thus, given the problem \eqref{eq:primal_QQR} with $F= 0$, the time interval $[0,T]$, and the basis $\mathsf{B}$, an \textit{SVD} decomposition (or \textit{LU}, when nonsingular, or any other preferred linear algebraic decomposition)  of the matrix $M$ can be precomputed off-line and stored. This can be used in conjunction with the vector $f$, which depends on the initial condition $x^0$, to generate the state, co-state, and control approximations for the LQR problem for each specific $x^0$. In the presence of nonuniqueness of solutions to the primal control problem, the base state $\bar{U}$ employed enforces the obtained solutions/approximations to be closest to it.

Of course, because the LQR dual functional \eqref{eq:LQR_dual} is convex, the powerful methods of convex optimization \cite{nesterov2018lectures} can be brought to bear on the discrete problem.

The classical method for solving LQR systems relies on a one-time solution of a nonlinear Riccati equation to generate the feedback control (with the matrix $C$ assumed invertible): one considers the LQR optimality conditions \eqref{eq:primal_QQR} with $F= 0$ and $A = 0$,
\begin{subequations}
    \allowdisplaybreaks
    \begin{align}
        \dot{x} & = Mx + Nu \label{eq:LQR1}\\
        \dot{p} & = Bx - M^Tp \label{eq:LQR2}\\
        0 & = N^Tp - Cu \label{eq:LQR3}
    \end{align}
\end{subequations}
with the boundary conditions \eqref{eq:primal_QQR_bc}, along with the ansatz
\begin{equation}\label{eq:riccati_ansatz}
p|_t := K|_t \, x|_t \qquad t \mapsto K|_t \in \R^{n \times n}_{sym}
\end{equation}
and assuming (\ref{eq:LQR1}-\ref{eq:LQR3}-\ref{eq:riccati_ansatz}) are satisfied, one chooses the function $K$ such that \eqref{eq:LQR2} is also satisfied (for alternate motivation based on the value function of the HJB procedure, see, e.g., \cite[Sec.~5.2.3]{evans1983introduction}). Thus,
\[
u = C^{-1}N^TKx \qquad \mbox{and} \qquad \dot{x} = Mx + NC^{-1}N^TKx,
\]
so that \eqref{eq:LQR2} corresponds to the statement
\[
\dot{K}x + K(Mx + NC^{-1}N^TKx) - Bx + M^TKx  = 0,
\]
which is satisfied if $K$ satisfies the (nonlinear) Riccati equation
\[
\dot{K}|_t + (K|_t M + M^T K|_t) + K|_t N C^{-1}N^TK|_t - B = 0; \qquad K|_T = - G,
\]
with the feedback control given by
\begin{equation}\label{eq:LQR_feedback}
u|_t = C^{-1}N^TK|_t \, x|_t.
\end{equation}
Then, for each specific problem corresponding to initial condition $x(0) = x^0$, one substitutes \eqref{eq:LQR_feedback} into \eqref{eq:LQR1} to solve for the state response $t \mapsto x(t)$.

In our scheme, the one-off solution procedure involves a linear problem. The analog of the solution of the Riccati problem corresponds to obtaining the solution operator for the \emph{linear}, second-order bvp for $(\gamma, \mu, \lambda)$ subject to the boundary conditions \eqref{eq:lambda_bc}-\eqref{eq:linear_dual_bc}-\eqref{eq:primal_QQR_bc} as a function of the boundary data $(x^0, \lambda^0)$ (an outline of the computational protocol for the corresponding discrete case has been discussed above). Admittedly, it is not in feedback control form, but it does not require 
\begin{itemize}
    \item the matrix $C$ to be invertible, and 
    \item the forcing vector $A$ to vanish. 
\end{itemize}

The formal non-requirement of the invertibility of the control cost matrix $C$ in the dual formulation of the LQR problem is intriguing and it remains to be seen whether a rigorous existence theorem can be obtained in the absence of this invertibility (we note that even the formal derivation of the matrix Riccati equation for the LQR problem requires this invertibility).
\subsection{An Example}
In this section we derive an explicit solution for the simplest LQR problem by the proposed dual scheme. Consider the problem
\begin{equation*}
    \begin{aligned}
        \mbox{maximize } P[u] & = - \int_0^T x^2(t) + u^2(t) \, dt \\
        \mbox{subject to } \dot{x}(t) & = x(t) + u(t) \\
         x(0) & = - x^0,     
    \end{aligned}
\end{equation*}
and $x^0$ is specified. Here, $n,m = 1$.
Thus, within our setup
\[
f(x,u) = x + u; \qquad G = 0; \qquad r(x,u) = x^2 + u^2,
\]
and the primal system of equations to be solved is
\begin{subequations}\label{eq:primal_ex}
    \begin{align}
        \dot{x} - x - u & = 0 \notag\\
        \dot{p} - 2x + p & = 0 \notag\\
        - p + 2 u & = 0 \tag{\ref{eq:primal_ex}}\\
        p(T) & = 0 \notag\\
        x(0) & = - x^0. \notag
    \end{align}
\end{subequations}
Choosing
\[
Q(x,u,p) = \half (x^2 + u^2 + p^2),
\]
the pre-dual functional is given by
\begin{subequations}
    \begin{align}
        \widehat{S}_Q & = \int_0^T \bigg( - x \dot{\gamma} - \gamma x - \gamma u 
                    - p \dot{\lambda} - 2 \lambda x + \lambda p - p \mu + 2 \mu u \notag\\
                      & \qquad \qquad - \half x^2 - \half u^2 - \half p^2 \bigg) \, dt \quad - \quad \gamma(0) x^0 \notag
    \end{align}
\end{subequations}
with Lagrangian
\[
\scl_Q = x \bigg(- \dot{\gamma} - \gamma - 2 \lambda - \half x \bigg) + u \bigg( - \gamma + 2 \mu - \half u \bigg) + p \bigg( - \dot{\lambda} + \lambda - \mu - \half p \bigg)
\]
and the DtP mapping
\begin{subequations}
    \begin{align}
        \p_x \scl_Q & = 0 : \qquad x^Q = - (\dot{\gamma} + \gamma + 2 \lambda) \notag\\
        \p_u \scl_Q & = 0:  \qquad u^Q = -(\gamma - 2 \mu) \notag\\
        \p_p \scl_Q & = 0: \qquad p^Q = -(\dot{\lambda} - \lambda + \mu). \notag
    \end{align}
\end{subequations}
The corresponding set of governing equations and boundary conditions for the dual problem is given by
\begin{subequations}
    \begin{align}
        \dot{x^Q} - x^Q - u^Q  = -(\ddot{\gamma} + 2 \dot{\lambda}  - 2 \gamma - 2 \lambda + 2 \mu) & = 0 \notag\\
        \dot{p^Q} - 2 x^Q + p^Q  = -(\ddot{\lambda} + \dot{\mu} - 2 \dot{\gamma} - 2 \gamma - 5 \lambda + \mu) & = 0 \notag\\
        - p^Q + 2 u^Q  = -(- \dot{\gamma} + 2 \gamma + \lambda - 5 \mu) & = 0 \label{eq:ex_mu}\\
        \notag \\
        x^Q(0) + x^0  = -(\dot{\gamma}(0) + \gamma(0) + 2 \lambda(0)) + x^0 & = 0 \notag\\
        p^Q(T)  = -(\dot{\lambda}(T) - \lambda(T) + \mu(T)) & = 0 \notag\\
         \qquad \gamma(T) & = 0 \notag\\
         \qquad \lambda(0) & = \lambda^0. \notag
    \end{align}
\end{subequations}
Eliminating $\mu$ using \eqref{eq:ex_mu},
\[
\mu = \frac{1}{5} \big( - \dot{\lambda} + 2 \gamma + \lambda \big),
\]
one obtains the following constant-coefficient, linear, second order reduced system
\begin{equation*}
    \begin{aligned}
        \begin{bmatrix}
            1 & & 0 \\
            \\
            0 & & \frac{4}{5}
        \end{bmatrix}
        \begin{Bmatrix}
            \ddot{\gamma} \\
            \\
            \ddot{\lambda}
        \end{Bmatrix}
        + \begin{bmatrix}
            0 & & \frac{8}{5} \\
                 \\
           - \frac{8}{5} & & 0
          \end{bmatrix}
           \begin{Bmatrix}
            \dot{\gamma}\\
                 \\
            \dot{\lambda}
            \end{Bmatrix}   
        + \begin{bmatrix}
            - \frac{6}{5} & & - \frac{8}{5} \\
            \\
            - \frac{8}{5} & & - \frac{24}{5}
          \end{bmatrix}
          \begin{Bmatrix}
            \gamma\\
                 \\
            \lambda
            \end{Bmatrix}
            =
            \begin{Bmatrix}
                0\\
                 \\
                0
            \end{Bmatrix}
    \end{aligned}
\end{equation*}
with boundary conditions
\begin{equation*}
\begin{aligned}
   \dot{\gamma}(0) + \gamma(0) + 2 \lambda(0) & = x^0 \\
   \frac{4}{5} \dot{\lambda}(T) - \frac{4}{5} \lambda(T) + \frac{2}{5}\mu(T) & = 0 \\
         \qquad \gamma(T) & = 0 \\
         \qquad \lambda(0) & = \lambda^0. 
\end{aligned}
\end{equation*}
The general solution involves two characteristic times (roots) $\pm \sqrt{2}$ each of multiplicity two (so the dual solution is not a linear combination of pure exponentials in time). We use the symbolic mathematics software {\sf Mathematica} \cite{Mathematica} to obtain the explicit solution listed in the Appendix. The explicit forms are not instructive except for the following fact:
\begin{itemize}
    \item the solution for $\left( x^Q, u^Q \right)$ does not depend on (the arbitrary choice) of $\lambda^0$ whereas the solution for the dual functions $(\gamma, \mu, \lambda)$ does; the former condition is necessary when the primal problem has unique solutions. 

    Indeed, we show below that the primal problem in this case has a unique solution.
\end{itemize}
The system \eqref{eq:primal_ex} can also be written as
\[
\begin{pmatrix} \dot{x}\\\dot{p} \end{pmatrix} = \underbrace{\begin{pmatrix} 1 && \frac12 \\ 2 && -1 \end{pmatrix}}_{=:A} \begin{pmatrix} x \\ p \end{pmatrix}, x(0) = -x^0, p(T) = 0.
\]
A solution (if it exists) to the ODE above is of the form 
\[
\begin{pmatrix}
    x \\ p
\end{pmatrix} = e^{At} \begin{pmatrix}
    \bar{x} \\ \bar{p}
\end{pmatrix},
\]
where $\bar{x}, \bar{p} \in\R$ need to be determined. It follows directly that $\bar{x} = -x^0$. The equation that determines $\bar{p}$ is
\[
0 = e_2 \cdot \left(e^{AT} \begin{pmatrix}
    \bar{x} \\ \bar{p}
\end{pmatrix} \right) = (e^{AT})_{21} \bar{x} + (e^{AT})_{22} \bar{p}.
\]
Hence, if $(e^{AT})_{22} \neq 0$ then the value of $\bar{p}$ is uniquely determined. If $(e^{AT})_{22}=0$ then $(e^{AT})_{21} \neq 0$ (since $e^{AT}$ is always invertible) and there are either infinitely many solutions for $\bar{p}$  (if $\bar{x} = 0$) or no solution for $\bar{p}$ (if $\bar{x}\neq 0$). 

In this particular case it can be checked that $(e^{AT})_{22} \neq 0$ and the solution to \eqref{eq:primal_ex} is unique.

\begin{appendix} 
\section{Appendix: Explicit primal and dual solutions for \eqref{eq:primal_ex}}

\begin{subequations}
\allowdisplaybreaks
    \begin{align}
     \bullet \quad x^Q(t)  &= \frac{x_{num}(t)}{x_{den}(t)} \qquad \mbox{where}\\
     \notag \\
x_{num}(t) & = e^{-\sqrt{2} t} \  \big( -x^0 \big)  \ \bigg( \ -\left(\sqrt{2}-2\right) e^{2 \sqrt{2} (t+2 T)} +2 \left(\sqrt{2}+2\right) e^{2 \sqrt{2} (t+T)} \notag\\
& \qquad \qquad \qquad \ +\left(7 \sqrt{2}+10\right) e^{2 \sqrt{2} t} -2 \left(\sqrt{2}-2\right) e^{4 \sqrt{2} T} \notag\\
 &  \qquad \qquad \qquad \qquad  \qquad +\left(\sqrt{2}+2\right) e^{2 \sqrt{2} T} +\left(10-7 \sqrt{2}\right) e^{6 \sqrt{2} T} \ \bigg) \notag\\
  x_{den} (t)  & = \bigg(\left(\sqrt{2}-2\right) e^{2 \sqrt{2} T}-\sqrt{2}-2\bigg) \bigg(-2 e^{2 \sqrt{2} T}+\left(2 \sqrt{2}-3\right) e^{4 \sqrt{2} T}-2 \sqrt{2}-3 \bigg); \notag
    \end{align}
\end{subequations}
\begin{equation*}
    \begin{aligned}
    \\
      \bullet \quad   u^Q(t) = \frac{e^{-\sqrt{2} t} \ \big(- x^0 \big) \ \left(2 \sqrt{2} e^{2 \sqrt{2} T}+\left(3 \sqrt{2}-4\right) e^{4 \sqrt{2} T}+3 \sqrt{2}+4\right) \left(e^{2 \sqrt{2} t}-e^{2 \sqrt{2} T}\right)}{\left(\left(\sqrt{2}-2\right) e^{2 \sqrt{2} T}-\sqrt{2}-2\right) \left(-2 e^{2 \sqrt{2} T}+\left(2 \sqrt{2}-3\right) e^{4 \sqrt{2} T}-2 \sqrt{2}-3\right)};
    \end{aligned}
\end{equation*}
\begin{equation*}
    \begin{aligned}
    \\
       \bullet \quad  p^Q(t) = \frac{2 e^{-\sqrt{2} t} \ \big(- x^0 \big) \ \left(2 \sqrt{2} e^{2 \sqrt{2} T}+\left(3 \sqrt{2}-4\right) e^{4 \sqrt{2} T}+3 \sqrt{2}+4\right)  \left(e^{2 \sqrt{2} t}-e^{2 \sqrt{2} T}\right)}{\left(\left(\sqrt{2}-2\right) e^{2 \sqrt{2} T}-\sqrt{2}-2\right) \left(-2 e^{2 \sqrt{2} T}+\left(2 \sqrt{2}-3\right) e^{4 \sqrt{2} T}-2 \sqrt{2}-3\right)}.
    \end{aligned}
\end{equation*}

It has been verified (in {\sf Mathematica}) that \eqref{eq:primal_ex} is satisfied by the formulae $x = x^Q$,  $u = u^Q$, $p = p^Q$, where the righ-hand-sides are given above. 

The \emph{$\lambda^0$-dependent} solutions to the dual problem are given by 
\begin{subequations}
\allowdisplaybreaks
    \begin{align}
    \bullet \quad \gamma(t) & = \frac{\gamma_{num}(t)}{\gamma_{den}(t)} \qquad \mbox{where} \notag\\
    \notag\\
    \gamma_{num}(t) & = e^{-\sqrt{2} t} \bigg( \ e^{6 \sqrt{2} T} \left(\left(6 \sqrt{2}-8\right) \lambda^0+\left(\left(4 \sqrt{2}-6\right) t-5 \sqrt{2}+6\right) x^0\right) \notag\\
    & \qquad \qquad \quad +e^{2 \sqrt{2} (t+T)} \left(-4 \sqrt{2} \lambda^0-4 t x^0+4 T x^0+6 \sqrt{2} x^0\right) \notag\\
    & \qquad \qquad \quad   + e^{2 \sqrt{2} (t+T)} \left(-4 \sqrt{2} \lambda^0-4 t x^0+4 T x^0+6 \sqrt{2} x^0\right) \notag\\
    & \qquad \qquad \quad +e^{2 \sqrt{2} (t+2 T)} \left(\left(8-6 \sqrt{2}\right) \lambda^0+x^0 \left(\left(4 \sqrt{2}-6\right) t-8 \sqrt{2} T+12 T+5 \sqrt{2}-6\right)\right) \notag\\
    & \qquad \qquad \quad  +e^{2 \sqrt{2} T} \left(\left(6 \sqrt{2}+8\right) \lambda^0-x^0 \left(\left(4 \sqrt{2}+6\right) t-4 \left(2 \sqrt{2}+3\right) T+5 \sqrt{2}+6\right)\right) \quad \bigg) \notag\\
    \gamma_{den}(t) & = \left(\left(\sqrt{2}-2\right) e^{2 \sqrt{2} T}-\sqrt{2}-2\right) \left(-2 e^{2 \sqrt{2} T}+\left(2 \sqrt{2}-3\right) e^{4 \sqrt{2} T}-2 \sqrt{2}-3\right) \notag
    \end{align}
\end{subequations}
\begin{subequations}
\allowdisplaybreaks
    \begin{align}
     \bullet \quad \lambda(t) & = \frac{\lambda_{num}(t)}{\lambda_{den}(t)} \qquad \mbox{where} \notag\\
    \notag\\
      \lambda_{num} (t) & =  e^{-\sqrt{2} t}\bigg( \ e^{6 \sqrt{2} T} \left(\left(10-7 \sqrt{2}\right) \lambda^0+\left(7-5 \sqrt{2}\right) t x^0\right) \notag\\
           & \qquad \qquad \quad +e^{2 \sqrt{2} t} \left(\left(7 \sqrt{2}+10\right) \lambda^0+\left(5 \sqrt{2}+7\right) t x^0\right) \notag\\
           & \qquad \qquad \quad -2 e^{4 \sqrt{2} T} \left(\left(\sqrt{2}-2\right) \lambda^0+\left(\sqrt{2}-1\right) x^0 (t-T-1)\right) \notag\\
           & \qquad \qquad \quad +2 e^{2 \sqrt{2} (t+T)} \left(\left(\sqrt{2}+2\right) \lambda^0+\left(\sqrt{2}+1\right) x^0 (t-T-1)\right) \notag\\
           & \qquad \qquad \quad  +e^{2 \sqrt{2} (t+2 T)} \left(\left(\sqrt{2}-1\right) x^0 (t-2 (T+1))-\left(\sqrt{2}-2\right) \lambda^0\right) \notag\\
           & \qquad \qquad \quad +e^{2 \sqrt{2} T} \left(\left(\sqrt{2}+2\right) \lambda^0-\left(\sqrt{2}+1\right) x^0 (t-2 (T+1))\right) \ \bigg) \notag
    \end{align}
\end{subequations}
\begin{equation*}
    \lambda_{den}(t) = \left(\left(\sqrt{2}-2\right) e^{2 \sqrt{2} T}-\sqrt{2}-2\right) \left(-2 e^{2 \sqrt{2} T}+\left(2 \sqrt{2}-3\right) e^{4 \sqrt{2} T}-2 \sqrt{2}-3\right)
\end{equation*}
\begin{subequations}
\allowdisplaybreaks
    \begin{align}
    \bullet \quad \mu(t) & = \frac{\mu_{num}(t)}{\mu_{den}(t)} \qquad \mbox{where} \notag\\
    \notag \\
    \mu_{num}(t) & = e^{-\sqrt{2} t}\bigg( \ e^{6 \sqrt{2} T} \left(\left(3 \sqrt{2}-4\right) \lambda^0+\left(2 \sqrt{2}-3\right) t x^0-\sqrt{2} x^0+x^0\right) \notag\\
        & \qquad \qquad \quad +e^{2 \sqrt{2} t} \left(\left(-\left(2 \sqrt{2}+3\right) t+\sqrt{2}+1\right) x^0-\left(3 \sqrt{2}+4\right) \lambda^0\right) \notag\\
        & \qquad \qquad \quad +2 e^{4 \sqrt{2} T} \left(\sqrt{2} \lambda^0-x^0 \left(t-T+\sqrt{2}\right)\right) \notag\\
        & \qquad \qquad \quad -2 e^{2 \sqrt{2} (t+T)} \left(\sqrt{2} \lambda^0-x^0 \left(-t+T+\sqrt{2}\right)\right) \notag\\
        & \qquad \qquad \quad +e^{2 \sqrt{2} T} \left(\left(3 \sqrt{2}+4\right) \lambda^0-x^0 \left(\left(2 \sqrt{2}+3\right) t-4 \sqrt{2} T-6 T+\sqrt{2}+1\right)\right) \notag\\
        & \qquad \qquad \quad +e^{2 \sqrt{2} (t+2 T)} \left(\left(4-3 \sqrt{2}\right) \lambda^0+x^0 \left(\left(2 \sqrt{2}-3\right) t-4 \sqrt{2} T+6 T+\sqrt{2}-1\right)\right) \ \bigg) \notag\\
        \mu_{den}(t) & = \left(\left(\sqrt{2}-2\right) e^{2 \sqrt{2} T}-\sqrt{2}-2\right) \left(-2 e^{2 \sqrt{2} T}+\left(2 \sqrt{2}-3\right) e^{4 \sqrt{2} T}-2 \sqrt{2}-3\right). \notag
    \end{align}
\end{subequations}
\end{appendix}
\section*{Acknowledgments}
AA thanks Bob Kohn and Vladimir Sverak for separately raising the question of the connection of the Pontryagin Maximum Principle to the dual variational methodology that forms the basis of this work, which led to the exploration of the topic presented here by us. It is also a pleasure to acknowledge discussions with Soummya Kar on aspects of Control Theory and with Mario Berges. A Simons Pivot Fellowship grant \# 983171 to AA is acknowledged.

\printbibliography
\end{document}